\def\R{\mathbb R}
\def\l{\lambda}
\def\a{\alpha}
\def\b{\beta}
\def\build#1_#2^#3{\mathrel{\mathop{\kern 0pt#1}\limits_{#2}^{#3}}}
\def\ds{\displaystyle}
{\catcode"40=11
\gdef\Arc#1{\setbox\z@ \hbox{$#1$}\mathop{#1\mkern3mu }^{\@rc{\z@}}\limits}
\gdef\@rc#1{\setbox\@ne\hbox{$\braceld$} \hbox to\wd#1{$\braceld\leaders\hrule height\ht\@ne\hfill\bracerd$}} }
\newcounter{contador}
\newtheorem{propo}[contador]{Proposition}
\newtheorem{teo}[contador]{Theorem}
\newtheorem{lem}[contador]{Lemma}
\newtheorem{nota}[contador]{Remark}
\newtheorem{corol}[contador]{Corollary}
\begin{document}

\title*{A QRT-system of two order one homographic difference equations: conjugation
 to rotations, periods of periodic solutions, sensitiveness to initial conditions}
 \titlerunning{A QRT-system of two order one homographic difference equations}  \author{Guy Bastien and Marc Rogalski}

\institute{Guy Bastien \at Institut Math\'ematique de Jussieu-Paris Rive Gauche, University Pierre et Marie Curie and CNRS, \email{guy.bastien@imj-prg.fr}
\and Marc Rogalski \at Laboratoire Paul Painlev\'e, University of Lille 1 and CNRS, and IMJ-PRG \email{marc.rogalski@upmc.fr}}

\maketitle

\abstract{We study the ``homographic" system of order one difference equations in $\R_*^{+^2}$ 
$$u_{n+1}u_n=c+\frac{d}{ v_n},\,\,\,\,\,\,v_{n+1}v_n=c+\frac{d}{u_{n+1}},$$ for $c, d>0$. We prove that the orbit $(M_n)_n=\big((u_n,v_n)\big)_n$ of a point
$M_0=(u_0,v_0)$ is contained in an invariant cubic curve, and that the restriction to the positive part of this cubic of the associated dynamical system is
conjugated to a rotation on the circle. 
For a dense invariant set of initial points the solutions are periodic, and if $c=1$ (this is always possible) every
integer $n\geq N(d)$ is the minimal period of some periodic solution. Moreover, every $n\geq11$ is the minimal period of some solution for some $d>0$, and we 
find exactly the set of such minimal periods between 2 and 10. We study the
associated dynamical system, and prove that there is a chaotic behavior on every compact set of $\R_*^{+^2}$
 not 
 containing the equilibrium.}
 \vskip 1mm
  Keywords: difference equations, periodic solutions, sensitiveness to initial conditions

\section{A geometric definition for an homographic system of difference equations}
\label{sec:1}
% Always give a unique label
% and use \ref{<label>} for cross-references
% and \cite{<label>} for bibliographic references
% use \sectionmark{}
% to alter or adjust the section heading in the running head
%Instead of simply listing headings of different levels we recommend to
%let every heading be followed by at least a short passage of text.
%Further on please use the \LaTeX\ automatism for all your
%cross-references and citations.
First we remark that in the system of the abstract we can suppose $c=1$ (put $u_n=u'_n\sqrt c$ and $v_n=v'_n\sqrt c$). From now on we take $c=1$.

\subsection{From a family of cubic curves to a system of difference equations}
\label{subsec:1.1}

Let be the family of cubic curves ${\cal C}_K$ in the plane, equations of which are
\begin{equation}
xy(x+y)+(x+y)+ d-Kxy=0,\hskip 4mm\textrm {with}\hskip 4mm d>0,\,\,K\in\R. 
\end{equation}
We define a map $F:\R_*^{+^2}\to\R_*^{+^2}$ by the
following geometric construction:
\noindent if $M=(x,y)\in\R_*^{+^2}$, we consider the curve ${\cal C}_K$ which contains $M$; the horizontal line passing through $M$ cuts ${\cal C}_K$
in a second point $M'$; now the vertical line passing through $M'$ cuts again ${\cal C}_K$, and this intersection is
$F(M)$ (remark that the infinite points in horizontal and vertical directions are on the curve). It is easy to see that $F(x,y):=(X,Y)$ is defined by
\begin{align}
\left\{
\begin{aligned}
Xx&=1 +\frac d y ,\\  Yy&=1 +\frac d X 
\end{aligned}
\right.
\end{align}
or
 \begin{equation}
(X,Y)=\Big(\frac{y+ d}{xy},\frac{ d xy+y+d}{ y( y+ d)}\Big). 
\end{equation}
The map $F$ is defined on $\R_*^{+^2}$, with values in $\R_*^{+^2}$, and
it is easy to see that $F$ is an homeomorphism of $\R_*^{+^2}$ onto itself, satisfying
\begin{equation}
F^{-1}=S\circ F\circ S,  
\end{equation}
where $S$ is the symmetry with respect to the diagonal.
By definition the cubic curves ${\cal C}_K$ are {\sl invariant} under the action of $F$, and the quantity
\begin{equation}
G(x,y):=x+y+\frac1x+\frac1y+\frac{ d}{ xy}
\end{equation}
 is {\sl invariant} under the action of $F$: the curve ${\cal C}_K$ is the $K$-level set of $G$.
\vskip 1mm
If $M_0:=(u_0,v_0)\in\R_*^{+^2}$, then its iterated points $M_n:=(u_n,v_n)=F^n(M_0)$ are the solutions of the system of two order one difference
equations in $\R_*^{+^2}$
\begin{align}
\left\{
\begin{aligned}
u_{n+1}\,u_n&=1+\frac{ d}{ v_n},\\  v_{n+1}\,v_n&=1+\frac{ d}{u_{n+1}}, 
\end{aligned}
\right.
\end{align}
 or
 \begin{align}
 \left\{
 \begin{aligned}
u_{n+1}&=\frac{v_n+ d}{ u_nv_n}\\ v_{n+1}&=\frac{ d u_nv_n+v_n+d}{ v_n( v_n+ d)}. 
\end{aligned}
\right.
\end{align}
Thus the orbit of $M_0$ is included
into the cubic ${\cal C}_K$ passing through $M_0$, and the function $G$ is an invariant for the system (6): the quantity $\ds u_n+v_n+\frac{1}{u_n}+\frac{1}{
v_n}+\frac{ d}{ u_nv_n}$ is independant of the integer $n$.
\vskip 1mm
In fact, the map $F$ is a particular case of the so called QRT-maps, introduced in [8] and particularly  studied in [6]. But our goal is to study the
behavior of the solutions of system (6), and in particular to find the possible periods of periodic points, and the chaotic behavior of the map $F$. For
this, we prefer to use methods analogous to these used in [2] or [12] instead of to use the general theory of QRT-maps. 

We start with the search of the fixed points of $F$, and of the critical points of the function $G$.

\subsection{Existence, unicity and equality of the critical point of $G$ and the fixed point of $F$}
\label{subsec:1.2}

The equations of the critical points of $G$ in $\mathbb R_*^{+^2}$ are

\[x^2y-y- d=0,\hskip 4mm y^2x-x- d=0.\]

 By difference one has $(x-y)(xy+ d)=0$, and so $x=y:=t$ satisfies the equation
$t^3-t- d=0$ which has exactly one solution $\ell>0$. We denote $L:=(\ell,\ell)$ this critical point of $G$.
\vskip 1mm 
A fixed point $(x,y)$ of $F$ satisfies the same equations as these of the critical point of $G$, so it has the form $(s,s)$ where $s$ satisfies
 the same equation
$s^3- s- d=0$. So we have already proved a part of the following result.
\vskip 1mm
\begin{lem}\label{ptfixcrit}
The map $F$ has exactly one fixed point $L=(\ell,\ell)$ where $\ds\ell\in\big]\max(1,\root 3\of{d}),1+\frac{ d}{2}\big[$ is the positive
solution of the equation
\begin{equation}
t^3-t- d=0.
\end{equation}
The function $G$ tends to $+\infty$ at the infinite point of $\mathbb R_*^{+^2}$, and has a unique critical point which is the
equilibrium $L$, where $G$ attains is strict minimum
\begin{equation} 
K_m=\frac{4\ell+3d}{\ell^2}=3\ell+\frac{1}{\ell}>4.
\end{equation}
\end{lem}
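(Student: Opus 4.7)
The first half of the lemma has essentially been reduced in the excerpt to finding the positive roots of $p(t):=t^3-t-d$, so my plan is to analyze this cubic directly. Its derivative $p'(t)=3t^2-1$ is positive on $(1/\sqrt 3,\infty)$ and $p(0)=p(1)=-d<0$, so $p$ is strictly increasing past $t=1/\sqrt 3$ and remains negative on $[0,1]$; this yields a unique positive root $\ell$, necessarily with $\ell>1$. For the two-sided estimate, I would evaluate $p$ at the endpoints of the proposed interval: $p(\sqrt[3]{d})=-\sqrt[3]{d}<0$ forces $\ell>\sqrt[3]{d}$, while expanding
\begin{equation*}
p\!\left(1+\tfrac{d}{2}\right)=\tfrac{3d^{2}}{4}+\tfrac{d^{3}}{8}>0
\end{equation*}
forces $\ell<1+d/2$. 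Combining these gives the claimed interval $\bigl(\max(1,\sqrt[3]{d}),1+d/2\bigr)$.

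For the behavior of $G$, I would exploit that $G(x,y)=x+y+1/x+1/y+d/(xy)$ is a sum of five strictly positive terms on $\R_*^{+^2}$. Whenever a sequence of points leaves every compact subset of $\R_*^{+^2}$, at least one of $x,y,1/x,1/y$ tends to $+\infty$, and the corresponding term in $G$ already diverges; hence $G\to+\infty$ at the infinite point of $\R_*^{+^2}$. This properness forces the infimum of $G$ to be attained at an interior critical point, and by the first part that critical point is unique and equal to $L$. So $L$ is the strict global minimum and $K_m:=G(L)$ is well defined.

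To get the two expressions for $K_m$, I would substitute and use the fixed-point equation $\ell^3-\ell=d$. Directly,
\begin{equation*}
G(\ell,\ell)=2\ell+\tfrac{2}{\ell}+\tfrac{d}{\ell^{2}}=\tfrac{2\ell^{3}+2\ell+d}{\ell^{2}};
\end{equation*}
replacing $\ell^{3}$ by $\ell+d$ in the numerator immediately yields $(4\ell+3d)/\ell^{2}$, and replacing $d/\ell^{2}$ by $\ell-1/\ell$ yields $3\ell+1/\ell$. Finally, for the inequality $K_m>4$, I would observe that $\varphi(t):=3t+1/t$ satisfies $\varphi'(t)=3-1/t^{2}>0$ on $[1,\infty)$ and $\varphi(1)=4$, so the bound $\ell>1$ from the first part instantly gives $K_m=\varphi(\ell)>4$.

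There is no real obstacle here; the only mildly delicate point is making precise what ``tends to $+\infty$ at the infinite point of $\R_*^{+^2}$'' means, which I would resolve by interpreting it via the one-point compactification, equivalently by saying $G(x_n,y_n)\to+\infty$ whenever $\max(x_n,1/x_n,y_n,1/y_n)\to+\infty$.
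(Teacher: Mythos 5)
Your proposal is correct and follows essentially the same route as the paper: analyze the cubic $t^3-t-d$ for the fixed/critical point, use positivity of the terms of $G$ for properness, and substitute $\ell^3=\ell+d$ to get the two forms of $K_m$. The only cosmetic differences are that you verify $\ell<1+d/2$ by directly computing $p\bigl(1+\tfrac d2\bigr)=\tfrac{3d^2}{4}+\tfrac{d^3}{8}>0$ where the paper uses a tangent-line convexity argument, and you phrase properness via divergent sequences where the paper notes compactness of the sublevel sets $\{G\le M\}$.
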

\vskip 1mm

\begin{proof}
The set in $\mathbb R_*^{+^2}$ defined by $\{G\leq M\}$ is compact, for $M>0$, because on it we have $x+y\leq M$ and $\ds xy\geq\frac{ d}{ M}$. This
proves that $G$ tends to $+\infty$ at the infinite point of $\mathbb R_*^{+^2}$.

If $P(t)=t^3-t- d$, then $P'=3t^2-1$ vanishes at $\ds\sqrt\frac{1}{3}$, and one has $P(1)=- d$ and $P(\root 3\of{d})=-\root 3\of{d}$. Since the
curve $y=P(t)$ is, on $\ds\big[\sqrt\frac{1}{3},+\infty\big[$, above its tangent at the point $(1,- d)$, it is easy to see that
$\ds\ell\in\big]\max(1,\root 3\of{d}),1+\frac{ d}{2}\big[$. Formulas (9) are obvious, and the other points are previously proved.\hskip 4mm \qed
\end{proof}

\vskip 1mm 
Now Lemma \ref{ptfixcrit}  has an important consequence, which is a direct application of a result of [3] generalized in [5].
\vskip 1mm
\begin{propo}\label{PermanentStable}
 The solutions of system (6) are permanent; if $(u_0,v_0)\not=L$, then the solution diverges. The equilibrium $L$ is localy stable.
Moreover, for
$K>K_m$ the positive component
${\cal C}_K^+$ of the cubic  ${\cal C}_K$ is diffeomorphic to the circle $\mathbb T$ and surrounds the point $L$. 
\end{propo}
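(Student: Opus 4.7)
The plan is to apply the general theorem of [3], generalized in [5], which treats discrete dynamical systems preserving a coercive first integral with a unique interior non-degenerate minimum. The invariant here is $G$; the coercivity and uniqueness of the minimum $L$ are provided by Lemma \ref{ptfixcrit}, and a short computation on (5) shows that the Hessian of $G$ at $L$ is positive definite (so $L$ is non-degenerate). Fix $M_0 \neq L$ and set $K := G(M_0)$; since $K_m$ is attained only at $L$ we have $K > K_m$, and the whole orbit $(M_n)$ lies on the invariant set $\mathcal{C}_K^+ = \{G = K\}$.

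The heart of the proof is the topological description of $\mathcal{C}_K^+$, which I would obtain by a Morse-theoretic argument on the open manifold $\R_*^{+^2}$. By the Morse lemma applied at $L$, the small sublevel sets $\{G \leq K'\}$ for $K'$ slightly above $K_m$ are topological closed disks centered at $L$. Since $K_m$ is the only critical value of $G$ on $\R_*^{+^2}$, the (rescaled) gradient flow of $-G$ provides a diffeomorphism of $\{G \leq K\}$ onto any such small sublevel set; hence $\{G \leq K\}$ is a topological closed disk containing $L$ in its interior, and $\mathcal{C}_K^+ = \partial\{G \leq K\}$ is a smooth Jordan curve encircling $L$, diffeomorphic to $\mathbb{T}$.

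The remaining three claims follow quickly. \emph{Permanence:} the orbit is trapped in the compact set $\mathcal{C}_K^+ \subset \R_*^{+^2}$, hence bounded away from the axes and from infinity. \emph{Divergence:} $F$ restricts to a homeomorphism of the circle $\mathcal{C}_K^+$, so any limit of $(M_n)$ in $\R_*^{+^2}$ would be a fixed point of $F$ lying on $\mathcal{C}_K^+$; but the unique fixed point $L$ is not on $\mathcal{C}_K^+$, a contradiction. \emph{Local stability:} $G$ serves as a Lyapunov function; for any $\varepsilon > 0$ the strict-minimum property furnishes $K_\varepsilon > K_m$ with the open disk bounded by $\mathcal{C}_{K_\varepsilon}^+$ contained in $B(L,\varepsilon)$, and this disk is forward-invariant under $F$ by invariance of the level sets of $G$.

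The main obstacle is the connectedness of $\mathcal{C}_K^+$: a priori the level set could have extra components floating elsewhere in $\R_*^{+^2}$. The Morse-theoretic argument above dispatches this using non-degeneracy of the Hessian at $L$ together with coercivity of $G$; the authors sidestep such explicit verifications by invoking the general theorems of [3, 5], which carry out this analysis systematically for QRT-type systems admitting a coercive first integral with a unique interior strict minimum.
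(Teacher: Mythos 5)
Your proposal is correct and follows essentially the same route as the paper, which proves this proposition simply by invoking the general level-set/critical-point results of [3] and [5] as a direct consequence of Lemma \ref{ptfixcrit} (coercive invariant $G$ with a unique interior critical point, a strict minimum at the fixed point $L$). The Morse-theoretic details you supply (positive-definite Hessian at $L$, gradient-flow retraction of sublevel sets, exclusion of extra components of $\{G=K\}$, the Lyapunov argument for stability) are a sound reconstruction of what those cited results encapsulate.
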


\section{The group law on the cubic ${\cal C}_K$ and the dynamical system (2)}
\label{sec:2}
\vskip 2mm 
We will interpret the restriction of the map $F$ to the positive part ${\cal C}_K^+$ of ${\cal C}_K$ with the chord-tangent law on the cubic.
\vskip 2mm
\subsection{Study of the cubic curve ${\cal C}_K$}
\label{subsec:2.1}
The following lemma gives the essential facts about the cubic curve ${\cal C}_K$.
\vskip 1mm
\begin{lem}\label{ToutSurCubic}
For $K>K_m$, we have the following properties:

\noindent{\bf (1)} the cubic ${\cal C}_K$ is non-singular;

\noindent{\bf (2)} the cubic ${\cal C}_K$ has three asymptotes: the two axis $x=0$ and $y=0$, and the line $x+y=K$, which is an inflection tangent at the
infinite  point 
$D:=(1,-1,0)$ (in projective coordinates);

\noindent{\bf (3)} the cubic ${\cal C}_K$ cuts the axes at the points $A:=(-d,0)$ and $B:=(0,-d)$;

\noindent{\bf (4)} the positive component ${\cal C}_K^+$ of the cubic ${\cal C}_K$ is located in the triangular domain $x>0,\,\,y>0,\,\,x+y<K$; the part
${\cal C}_K\setminus\big\{{\cal C}_K^+\cup \{A\}\cup \{B\}\big\}$ of the cubic is contained in five triangular domains:
$x<0,\,\,y<0,\,\,x+y>-d\,;\,\,\,\,x>0,\,\,x+y<-d\,;\,\,\,\,y>0,\,\,x+y<-d\,;\,\,\,\,x<0,\,\,x+y>K\,;\,\,\,\,y<0,\,\,x+y>K$. The part $\overline{{\cal
C}_K}\setminus{\cal C}_K^+$ is connected in $\mathbb P^2(\mathbb R)$ ($\overline{{\cal C}_K}$ is the extension of ${\cal C}_K$ in $\mathbb P^2(\mathbb R)$).
\end{lem}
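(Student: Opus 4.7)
The plan is to treat the four items in sequence, repeatedly exploiting that $F(x,y)=xy(x+y)+(x+y)+d-Kxy$, written as the quadratic-in-$y$
\[
xy^2+(x^2-Kx+1)y+(x+d)=0,
\]
reduces each claim to an elementary calculation.

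For (1), I compute $\partial_xF-\partial_yF=(y-x)(x+y-K)$, so any singular point of ${\cal C}_K$ satisfies $x=y$ or $x+y=K$. The second case forces $F(x,K-x)=K+d=0$, impossible. The first, combined with $F=0$ and $\partial_xF=0$, gives the system $x^3-x-d=0$ and $K=3x+1/x$: its only positive root is $x=\ell$, producing $K=K_m$, while any negative real root produces $K<0$. Hence no real affine singularity exists for $K>K_m$. At the three points at infinity $(1,0,0)$, $(0,1,0)$, $(1,-1,0)$, computing the projective partials shows non-singularity for every $K>0$.

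For (2), the asymptotes $x=0$ and $y=0$ follow from the quadratic-in-$y$ form: one root converges to $-d$ (recovering $B$) while the other diverges like $-1/x$ as $x\to 0$, and symmetrically for $y=0$. For the third asymptote, substituting $y=K-x$ into $F$ collapses identically to the constant $K+d\neq 0$; by B\'ezout, the line $x+y=K$ meets $\overline{{\cal C}_K}$ with total multiplicity $3$ at its only common infinite point $D=(1,-1,0)$, making $D$ an inflection point with tangent $x+y=K$. Item (3) follows by setting $y=0$ or $x=0$ in $F=0$.

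For (4), the oval inclusion is easy: since $G(x,y)=K$ has five strictly positive summands on its left-hand side for $(x,y)\in{\cal C}_K^+$, we obtain $x+y<K$. The third-quadrant part is handled by substituting $x=-\xi$, $y=-\eta$, which rewrites the cubic as $\xi\eta(\xi+\eta)+(\xi+\eta)+K\xi\eta=d$ and forces $0<\xi+\eta<d$. The mixed quadrants are the delicate case: I will work again with the quadratic-in-$y$, tracking as $x$ sweeps through $(-\infty,-d)$, $(-d,0)$, and $(0,+\infty)$ the product $(x+d)/x$, sum $(Kx-x^2-1)/x$, and discriminant of its roots; from the resulting sign pattern one places the two branches inside the remaining triangles $\{x+y<-d\}$ and $\{x+y>K\}$, ruling out any excursion into the middle strip. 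Finally, $\overline{{\cal C}_K}\setminus{\cal C}_K^+$ is connected in $\mathbb P^2(\R)$ by the topological classification of smooth real projective cubics: since ${\cal C}_K$ is non-singular by (1) and ${\cal C}_K^+$ is an oval by Proposition~\ref{PermanentStable}, the complementary component must be a single pseudo-line, namely the arc traversing the five triangles through $A$, $B$, and the inflection point $D$.

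The principal obstacle is the sign analysis in the mixed quadrants of (4), where one must verify that no branch of the cubic wanders into the forbidden strips; the other items are essentially local or follow from a direct substitution.
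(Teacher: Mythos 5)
Items (1)--(3) of your proposal are correct, and your treatment of (1) is actually cleaner than the paper's: observing that $F$ restricted to the line $x+y=K$ is identically $K+d\neq0$ disposes of the case $x+y=K$ immediately, whereas the paper argues by a splitting of the projective cubic; your reduction of the case $x=y$ to $x^3-x-d=0$, $K=3x+1/x$ and the check at the three infinite points are both fine. The first- and third-quadrant parts of (4) (positivity of the five terms of $G$, and the substitution $x=-\xi$, $y=-\eta$) are also correct, and the appeal to the classification of smooth real projective cubics for the connectedness of $\overline{{\cal C}_K}\setminus{\cal C}_K^+$ is legitimate given (1) and Proposition~\ref{PermanentStable}.

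The gap is the mixed-quadrant part of (4), which you yourself flag as "the principal obstacle" and then leave as an unexecuted plan. As stated, the plan is not sufficient: the signs of the product $(x+d)/x$, the sum $(Kx-x^2-1)/x$ and the discriminant of the roots of $xy^2+(x^2-Kx+1)y+(x+d)=0$ locate the roots relative to $0$, but the claim to be proved is their location relative to the two lines $y=-d-x$ and $y=K-x$, and sum/product/discriminant alone do not decide on which side of those lines a root lies (for instance, for large $x>0$ the two roots are approximately $-(x-K)$ and $-1/x$, and distinguishing which of the two admissible triangles each falls into requires more than the sign pattern). To complete your fiberwise argument you would need to evaluate the quadratic at $y=K-x$ and $y=-d-x$ (giving $K+d$ and $x(x+d)(K+d)$) and compare with the sign of the leading coefficient $x$ -- but at that point you have essentially rediscovered the paper's argument, which rewrites the cubic once and for all as $xy(x+y-K)=-(x+y+d)$ and reads off all six regions from the single observation that $\mathrm{sign}(xy)\cdot\mathrm{sign}(x+y-K)=-\mathrm{sign}(x+y+d)$. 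I recommend replacing your case-by-case sweep with that one-line factorization; it subsumes your first- and third-quadrant arguments as well.
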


\begin{proof}
Points (2) and (3) are easy. Point (4) becomes from Lemma \ref{ptfixcrit} and from an other form for the equation of ${\cal C}_K$:
$xy(x+y-K)=-(x+y+d)$, so one has only to compare the signs of $xy$, $x+y-K$ and $x+y+d$. See the form of the cubic curve in Figure 1, which is proved in
Lemma \ref{FormCubic} below, in Subsection \ref{subsec:4.3}. The set
$\overline{{\cal C}_K}\setminus{\cal C}_K^+$ is connected by its infinite points.
\vskip 1mm
\begin{figure}
\includegraphics[scale=.65]{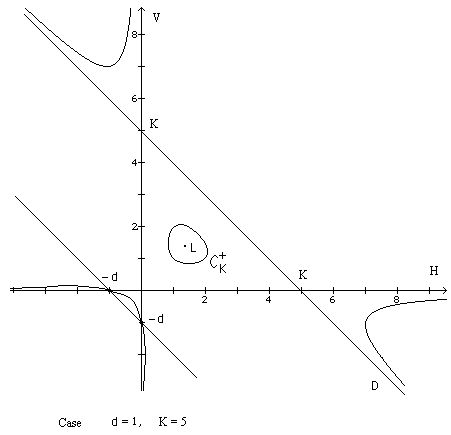}
\label{Figure 1}
\caption{}
\end{figure}

For point (1), the equations of a singular point $(x,y,t)$ are $f'_x=0,\,\,f'_y=0,\,\,f'_t=0$, where $f(x,y,t)=xy(x+y)+(x+y)t^2+dt^3-Kxyt$. We obtain
\begin{align}
\left\{
\begin{aligned}2xy+y^2+t^2-Kyt&=0,\\ 2yx+x^2+t^2-Kxt&=0,\\ 3dt^2+2(x+y)t-Kxy&=0.
\end{aligned}
 \right.
 \end{align}
 
 Obviously $t=0$ is not possible. The difference of the two
first equations gives the relation $(y-x)(x+y-Kt)=0$.

First suppose that $x\not=y$. Then we have two different (and symmetric) real singular points on the inflection asymptote, and so the curve splits in this
line and some symmetric hyperbola with equation $xy-\a t^2=0$. We write the equation of ${\cal C}_K$ under the form
$$(xy-\a t^2)(x+y-Kt)+(\a+1)(x+y)t^2+(d-\a K)t^3=0.$$ 
 If we take a finite point on the line $x+y-Kt=0$ we must have $(\a+1)K+d-\a K=0$, and thus $K=-d$,
which is impossible. 

Thus $x=y:=s$ satisfies the equations $Ks^2-4s-3d=0$ and $3s^2-Ks+1=0$. By elimination of $K$ between these two equations, we obtain $s^3-s-d=0$. So
$s=\ell$, and the first of the previous equations gives $\ds K=\frac{4\ell+3d}{\ell^2}=K_m$. In this case, $L$ is the singular point of ${\cal C}_{K_m}$: it
is a real isolated point of the curve, and ${\cal C}_{K_m}^+$ reduces to $\{L\}$.\hskip 4mm \qed
\end{proof}
\vskip 1mm

\subsection{The map $F$ and the group law on the cubic ${\cal C}_K$}
\label{subsection:2.2}

For $K>K_m$ there is on the cubic ${\cal C}_K$, more exactly on its extension $\widetilde{{\cal C}_K}$ in $\mathbb P^2(\mathbb C)$, a classical
chord-tangent group law (see [1] or [7]).  Denote
$H$ the infinite point in horizontal direction and
$V$ the infinite point in vertical direction. If we denote, for $P,\,Q\in\widetilde{{\cal C}_K}$, $P*Q$ the third point (finite or infinite) of
$\widetilde{{\cal C}_K}$ on the line $(PQ)$ (or on the tangent to $\widetilde{{\cal C}_K}$ at $P$ if $P=Q$), the chord-tangent group law, the zero element
of which is the point $V$, is
\begin{equation}
P\build{+}_{V}^{ }Q=(P*Q)*V.
\end{equation}
Note that in this case $V$ is not an inflection point of $\widetilde{{\cal C}_K}$; so the relation of alignment
of three points $P,\,Q,\,R\in\widetilde{{\cal C}_K}$ is 
$P\build{+}_{V}^{ }Q\build{+}_{V}^{ }R=V*V$.  Moreover the real part $\overline{{\cal C}_K}$ in $\mathbb P^2(\mathbb R)$ is a subgroup of the complex cubic.

Now from the geometric definition of the map $F$ given in Section \ref{subsec:1.1} we deduce the following result for the map $\overline{F}$, extension of $F$ to
 $\mathbb
P^2(\mathbb R)$ given by $\overline{F}(x,y,t)=\big(x(y+dt)^2,\,x(dxy+yt+dt^2),\,xy(y+dt)\big)$.
\vskip 1mm
\begin{propo}\label{FasAdd}

For $K>K_m$ the restriction of the map $\overline{F}$ to the cubic $\overline{{\cal C}_K}$ is nothing but the addition of the point
$H$ for the group law 
$\build{+}_{V}^{ }$: one has, for $M\in\overline{{\cal C}_K}$
\begin{equation}
\overline{F}(M)=M\build{+}_{V}^{ }H,\,\,\,\textit {and}\,\,\,M_n:=\overline{F}^{\,n}(M_0)=M_0\build{+}_{V}^{ }nH.
\end{equation}
So a solution of (6) with starting
point $M_0\in\overline{{\cal C}_K}$ is periodic with minimal period $n$ iff the infinite point $H$ is exactly of order $n$ in the group law
$\build{+}_{V}^{ }$ on $\overline{{\cal C}_K}$. If a point $M_0\in\overline{{\cal C}_K}$ is $n$-periodic, it is also the case for all the points of
$\overline{{\cal C}_K}$. The set ${\cal C}_K^+$ is stable under the action of $\overline{F}$, which coincides with $F$ on ${\cal C}_K^+$.
\end{propo}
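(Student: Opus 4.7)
The plan is to interpret the geometric construction of $F$ recalled in Section~\ref{subsec:1.1} in terms of the chord-tangent operation $*$ on $\overline{{\cal C}_K}$, and then recognize the composition as translation by $H$ for the group law $+_V$. By Lemma~\ref{ToutSurCubic}(2), the points at infinity $H$ and $V$ lie on $\overline{{\cal C}_K}$, since $y=0$ and $x=0$ are asymptotes. For $M\in{\cal C}_K$, the horizontal line through $M$ is a projective line containing both $M$ and $H$, so it meets $\overline{{\cal C}_K}$ in the third point $M*H$, which is exactly the point $M'$ of the geometric construction. Likewise, the vertical line through $M'$ contains $V$, so $F(M)=M'*V=(M*H)*V$, and by the very definition $P\,+_V\,Q=(P*Q)*V$ of the group law with neutral element $V$, this is $M\,+_V\,H$.

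This identity is established initially on a Zariski-dense open subset of $\overline{{\cal C}_K}$, away from the points $A$, $B$, $H$, $V$ and the tangential loci where the chord construction becomes degenerate. Both sides are morphisms of the smooth projective cubic $\overline{{\cal C}_K}$: the left side by the explicit rational formulas for $\overline{F}$ stated just before the proposition (and extended to a morphism automatically, since the curve is smooth by Lemma~\ref{ToutSurCubic}(1)), and the right side because translation on a smooth cubic is regular. Morphisms of a smooth projective curve that agree on a non-empty open subset are equal, so $\overline{F}(M)=M\,+_V\,H$ throughout $\overline{{\cal C}_K}$, and the formula $M_n=M_0\,+_V\,nH$ follows by a straightforward induction.

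The periodicity statement now becomes group-theoretic: $M_n=M_0$ iff $nH=V$ (the identity of $+_V$), i.e.\ iff $n$ is a multiple of the order of $H$. Consequently the minimal period is precisely the order of $H$ in $(\overline{{\cal C}_K},+_V)$, a number that does not depend on $M_0$; so if a single point of $\overline{{\cal C}_K}$ is $n$-periodic then every point is. For the stability of ${\cal C}_K^+$: the map $F$ sends $\R_*^{+^2}$ to itself by its very construction and preserves each level set ${\cal C}_K$, while Lemma~\ref{ToutSurCubic}(4) shows that ${\cal C}_K\cap\R_*^{+^2}={\cal C}_K^+$; hence $F({\cal C}_K^+)\subset{\cal C}_K^+$, and $\overline{F}$ clearly restricts to $F$ on this set. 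The main delicate point in this program is the extension of $\overline{F}(M)=M\,+_V\,H$ to the special points of $\overline{{\cal C}_K}$ where the two chord constructions degenerate (e.g.\ at $H$, $V$, $A$, $B$, or where a tangency occurs); the morphism/density argument above handles all these cases at once, and avoids an otherwise tedious geometric case analysis.
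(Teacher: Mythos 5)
Your proof is correct and follows essentially the same route as the paper: the authors simply declare relations (12) ``obvious from the geometric definition of $F$'', which is precisely your observation that the horizontal line through $M$ contains $H$ (so $M'=M*H$) and the vertical line through $M'$ contains $V$ (so $F(M)=(M*H)*V=M\build{+}_{V}^{ }H$), and the periodicity and stability arguments coincide with theirs. Your additional care about degenerate configurations, handled by the density-of-agreement argument for morphisms of a smooth projective curve, is a sound way to make rigorous what the paper leaves implicit, but it does not constitute a different approach.
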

\vskip 1mm
\begin{proof}
Relations (12) are obvious from the geometric definition of $F$ in Section \ref{subsec:1.1}. Then $M\in\overline{\cal C}_K$ has minimal period $n$ iff 
\begin{equation}
nH=V\,\,\,{\rm and}\,\,\, kH\not=V\,\,\,{\rm for}\,\,\,1\leq k\leq n-1.
\end{equation}
 But this condition depends only on $\overline{{\cal C}_K}$, that is on $K$,
and not on the particular point $M\in\overline{{\cal C}_K}$: this proves the last assertion of the proposition. \hskip 4mm\qed
\end{proof}
\vskip 1mm
\begin{nota}\label{Remark1}
In [3], exactly the same cubic curve ${\cal C}_K$ was used, as an invariant level set for the order 2 difference equation 
$\ds x_{n+2}x_n=1+\frac{a}{x_{n+1}}$, defined by the chord-tangent law $\build{+}_{D}^{ }$ on $\overline{{\cal C}_K}$, the zero element of which is the infinite
point $D$ on the asymptote $x+y=K$: one has in this case
$\,\,(x_{n+2},x_{n+1})=(x_{n+1},x_n)\build{+}_{D}^{ }V$.
\end{nota}
\vskip 1mm
\subsection{The group law on the cubic $\overline{{\cal C}_K}$ and periodic solutions of the system (6)}
\label{subsec:2.3}
We will see elementary that no solution of (6) has minimal period 2, 3 nor 4 .
\vskip 1mm
\begin{lem}\label{PasPeriodes234}
The only solutions of (6) which are 2, 3 or 4 periodic are constant (identical to the point $L$).
\end{lem}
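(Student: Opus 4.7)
By Proposition \ref{FasAdd}, for $K>K_m$ the solution starting at $M_0\in\overline{{\cal C}_K}$ is $n$-periodic iff $nH=V$ in the chord-tangent group on $\overline{{\cal C}_K}$, and this condition depends only on $K$. So the plan is to compute the small multiples $2H,3H,4H$ directly in coordinates. Two tangent computations are prerequisite: at $H=(1,0,0)$ the tangent to ${\cal C}_K$ is the line $y=0$, meeting the cubic again at $A=(-d,0)$, so $H*H=A$; symmetrically the tangent at $V=(0,1,0)$ is $x=0$, giving $V*V=B=(0,-d)$.

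For $n=2$, I compute $2H=(H*H)*V=A*V$ by intersecting the vertical line $x=-d$ with ${\cal C}_K$: substituting factors as $y\bigl[-dy+d^2+Kd+1\bigr]=0$, so $2H=(-d,c)$ with $c:=d+K+1/d$. Since $(-d,c)$ is a finite point and $V$ is at infinity, $2H\neq V$. For $n=3$, $3H=V$ is equivalent to $2H=-H$. The inverse $-H$ is the third intersection of the line through $H$ and $V*V=B$, i.e.\ of the horizontal line $y=-d$, with the cubic; by the symmetric computation $-H=(c,-d)$. Then $(-d,c)=(c,-d)$ forces $c=-d$, hence $K=-2d-1/d<0$, contradicting $K>K_m>0$.

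For $n=4$, since $2H\neq V$ already, the equality $4H=V$ amounts to $-(2H)=2H$, which geometrically means that the tangent to ${\cal C}_K$ at $2H=(-d,c)$ passes through $V*V=B=(0,-d)$. Computing the partial derivatives $f_x,f_y$ of the affine equation $f(x,y)=xy(x+y)+(x+y)+d-Kxy$ at $(-d,c)$, imposing that the tangent line contains $B$, and substituting $c=d+K+1/d$, one obtains after clearing denominators the quadratic equation in $K$
\[
d^2K^2+(2d^3+3d)K+(d^4+4d^2+2)=0,
\]
whose discriminant is $d^2(1-4d^2)$. For $d>1/2$ this is negative, so no real $K$ exists; for $d\leq 1/2$, the sum of the roots $-(2d^3+3d)/d^2$ is negative and their product $(d^4+4d^2+2)/d^2$ is positive, so both roots are strictly negative. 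In either case no $K>K_m>0$ is admissible, so there is no minimal period~4. The only delicate step is the tangent-through-$B$ bookkeeping for $n=4$; it is a direct but lengthy implicit-differentiation calculation, and once the quadratic above is written down the discriminant/sign analysis closes the argument.
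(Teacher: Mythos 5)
Your proof is correct, and I verified the key computations: $2H=A*V=(-d,c)$ with $c=(d^2+Kd+1)/d$ (this agrees with formula (36) of the paper), $-H=(c,-d)$, and the tangent-through-$B$ condition for $4H=V$ does reduce to $(d^2+Kd+1)^2+(d^2+Kd+1)+d^2=0$, i.e.\ $d^2K^2+(2d^3+3d)K+(d^4+4d^2+2)=0$ with discriminant $d^2(1-4d^2)$, so no root can be $\geq K_m>4$. However, your route is genuinely different from the paper's. The paper treats periods $2$ and $3$ by purely elementary manipulation of the recurrence itself (for period $2$, $u_{n+2}u_{n+1}=u_nu_{n+1}$ forces $v_n$ constant; for period $3$ the constancy of the triple products forces $u_{n+1}$ constant), with no reference to the cubic or the group law; only period $4$ is handled geometrically, by locating $2H$ on the branch ${\cal S}^+$ and $-2H=(2H)*B$ on the branch ${\cal S}$, which requires knowing the disposition of the branches (Lemma \ref{FormCubic}, proved only much later). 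You instead compute $2H$, $-H$ and the period-$4$ condition explicitly in coordinates via the chord--tangent law $\build{+}_{V}^{ }$ of Proposition \ref{FasAdd}. What your approach buys is uniformity and self-containedness: all three cases reduce to concrete sign/discriminant checks against $K>K_m>4$, and in particular the period-$4$ case avoids the forward reference to the branch geometry. What the paper's approach buys is that the $n=2,3$ cases need no machinery at all, and the qualitative branch argument for $n=4$ avoids the somewhat lengthy tangent computation (which, as a small caveat, is legitimate only because the cubic is non-singular for $K>K_m$ by Lemma \ref{ToutSurCubic}, so the affine gradient at the finite point $2H$ is nonzero --- worth saying explicitly). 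One further point worth making explicit in either version: every non-constant solution lies on some ${\cal C}_K^+$ with $K>K_m$ strictly, since $K_m$ is the strict minimum of $G$ attained only at $L$; this is what licenses the restriction to $K>K_m$ throughout.
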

\vskip 1mm
\begin{proof}
If a solution is 2-periodic, we have 
\[1+\frac{d}{ v_{n+1}}=u_{n+2}u_{n+1}=u_nu_{n+1}=1+\frac{d}{ v_n},\] 
and so $v_n$ is constant, equal to $v_0$; then $\ds 1+\frac{d}{u_{n+1}}=v_0^2$, and thus
$u_n$ is constant.

If a solution is 3-periodic, then $u_{n+2}u_{n+1}u_n=a$, $a$ constant, and $v_{n+2}v_{n+1}v_n=b$, $b$ constant. So we have $\,\,\ds u_{n+2}\Big(1+\frac{d}
{v_n}\Big)=a$ and $\,\,\ds v_{n+2}\Big(1+\frac{d}{ u_{n+1}}\Big)=b$, and thus $\,\,\ds\frac{d}{ v_n}=\frac{a}{ u_{n+2}}-1$ and $\,\,\ds1+\frac{d}{u_{n+1}}=\frac{b}
{v_{n+2}}= \frac{b}{ d}\Big(\frac{a}{ u_{n+4}}-1\Big)=\frac{b}{d}\Big(\frac{a}{ u_{n+1}}-1\Big)$; thus $\,\,\ds u_{n+1}=\frac{ab-d^2}{ b+d}$ is constant, and so $v_n$
is also constant. 

\vskip 1mm 
Now we will search if 4 may be a period of a solution of (6) by studying geometrically the equation $4H=V$.

First it is easy to see the opposite of a point $X$ of $\overline{{\cal C}_K}$ for the group law $\build{+}_{V}^{ }$:
\begin{equation}
-X=X*B,\,\,\,{\rm where}\,\,\,B=V*V=(0,-d,1).
\end{equation}
We denote $A=H*H=(-d,0,1)$, and ${\cal S},\,{\cal S}^+$ and ${\cal S}^-$ the three connected real
affine components of ${\cal C}_K\setminus{\cal C}_K^+$ located in the three domains $\{x+y<0\},\,\{x+y>K\}\cap\{x<0\}$ and $\{x+y>K\}\cap\{y<0\}$ (see Lemma \ref{ToutSurCubic}
 and Lemma \ref{FormCubic}).

 We have easily $2H\in{\cal S}^+$. We see that $-2H=(2H)*B$ (from (14)) is on ${\cal S}$. So we have $2H\not=-2H$, that is $4H\not=V$: there is no
4-periodic solution of (6) except $\{L\}$. \hskip 4mm\qed
\end{proof}
\vskip 1mm
In the following, we will transform the cubic curve $\overline{{\cal C}_K}$ in a standard cubic with equation $y^2=4x^3-g_2x-g_3$ and deduce of
this that the restriction of the map $F$ to ${\cal C}_K^+$ is conjugated to a rotation on the circle (remark that we know already that ${\cal C}^+_K$ is
diffeomorphic to the circle, from Proposition \ref{PermanentStable}). This result will give an other approach for the question of periodic solutions of (6). For algebraic
consistency, we work with the version of the cubic in homogeneous complex coordinates, that is in $\mathbb P^2(\mathbb C)$, and we denote as above
$\widetilde{{\cal C}_K}$ this extension of the cubic, with equation
$xy(x+y)+(x+y)t^2+ dt^3-Kxyt=0$.
\vskip 1mm
\section{Conjugation of $F_{|{\cal C}_K^+}$ to a rotation on the circle via Weierstrass' function $\wp$}
\label{sec:3}

 We start with the projective transformation ${\cal T}_1$ defined by
 \begin{align}
\left\{ 
 \begin{aligned}
 2X&=x+y,\\ 2Y&=y-x,\\ T&=x+y-Kt, 
 \end{aligned}
 \right.
 \end{align}
or \hskip 30mm $\ds x=X-Y,\,\, y=X+Y,\,\,t=\frac{2X-T}{ K}$,

\noindent  in order to transform the diagonal asymptote into the line at infinity and to
use the symmetry of the curve. The new cubic has for equation
\[Y^2T=8\frac{K+d}{ K^3}X^3+\frac{K^3-8K-12d}{ K^3}X^2T+2\frac{K+3d}{ K^3}XT^2-\frac{d}{K^3}T^3.\] 
 Now we make an affine transformation ${\cal T}_2$,
where $x,y,t$ are the new coordinates:
\begin{align}
 \left\{
 \begin{aligned}
 X&=\l x,\\ Y&=\l^2y,\hskip 8mm{\rm where}\hskip 8mm \l=\frac{1}{ K^{3/2}},\,\,\,\mu=2\frac{K+d}{ K^{3/2}}.\\\ T&=\mu t,
 \end{aligned}
 \right.
 \end{align}
 We obtain a new cubic with equation
\[y^2t=4x^3+(K^3-8K-12d)x^2t+4(K+3d)(K+d)xt^2-4d(K+d)^2t^3.\]
  We put
  \begin{equation}
  A:=K^3-8K-12d,
  \end{equation}
   and make an horizontal translation ${\cal T}_3$ defined by (with new variables $X,Y,T$)
   \begin{equation}
   X=x+\frac{A}{12}t,\,\,\,\,Y=y,\,\,\,\,T=t.
 \end{equation}  
    We obtain a new cubic $\Gamma_K$ with equation
  \begin{equation}  
    Y^2T=4X^3-g_2XT^2-g_3T^3,
    \end{equation}
    where 
   \begin{equation} 
    g_2=\frac{1}{12}(K^6-16K^4-24dK^3+16K^2);
    \end{equation}
     the value of $g_3$ will be unuseful.

\vskip 1mm

\begin{figure}
\includegraphics[scale=.65]{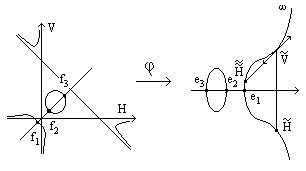}
\label{Figure 2}
\caption{}
\end{figure}

We interpret these three changes of variables as transformations between cubics, and so $\phi:={\cal T}_3\circ{\cal T}_2\circ{\cal T}_1$ is a projective real
transformation of $\overline{{\cal C}_K}$ onto $\Gamma_K$. So it transforms the two projective real connected parts of $\overline{{\cal C}_K}$ in two
projective real connected parts of
$\Gamma_K$, and so the positive compact part ${\cal C}_K^+$ of ${\cal C}_K$ onto the compact component $\Gamma_K^+$ of $\Gamma_K$. The three points of
${\cal C}_K$ on the diagonal, with coordinates $f_1,\,f_2,\,f_3$, become the three points of $\Gamma_K$ on the $X$-axis, with coordinates  $e_1,\,e_2,\,e_3$
(see Figure 2; we will see in lemma \ref{Formulaei} that the ``order" of the three points are inverted).

By the map $\phi$, the addition of $H$ on $\overline{{\cal C}_K}$ for the chord-tangent law $\build{+}_{V}^{ }$ with zero element $V$ is conjugated to the
addition of $\widetilde{H}$ on $\Gamma_K$ for the chord-tangent law $\build{+}_{\widetilde{V}}^{ }$ with zero element $\widetilde{V}$, where
$\widetilde{H}=\phi(H)$ and
$\widetilde{V}=\phi(V)$ (this is a general fact, but easy in our case because $\phi$ is a linear map in homogeneous coordinates).

It is easy to see that we have 
\begin{equation}
\widetilde{V}=
\begin{pmatrix}
K+d+\frac{A}{12}\\ (K+d)K^{3/2}\\ 1
\end{pmatrix}
\hskip 12mm{\rm and}\hskip 12mm 
\widetilde{H}=
\begin{pmatrix}
K+d+\frac{A}{12}\\ -(K+d)K^{3/2}\\1
\end{pmatrix}
\end{equation}

If $\omega$ is the infinite point on $\Gamma_K$ at the vertical direction, we know (see [1], [7]) that the standard group chord-tangent law
$\build{+}_{\omega}^{ }$ on
$\Gamma_K$ with $\omega$ as zero element is isomorphic to the standard group law on $\mathbb T^2$ (if we take real and complex points on $\Gamma_K$), via the
parametrization of $\Gamma_K$ by the Weierstrass' function $\wp$. But the addition of $\widetilde{H}$ on $\Gamma_K$ is not for its standard law, but for
$\build{+}_{\widetilde{V}}^{ }$. So we will make a supplementary isomorphism on $\Gamma_K$ in order to pass from a law to the other.

Recall that generally the chord-tangent law on a cubic, with zero element $Z$, is defined by
\[M\build{+}_{Z}^{ }P=(M*P)*Z,\]
 where $U*V$ denotes the third point of the intersection of the line $(UV)$ with the cubic.

Now we will define a group isomorphism $\psi$ of $(\Gamma_K,\widetilde{V})$ onto $(\Gamma_K,\omega)$ by
 \begin{equation}
 \psi:\Gamma_K\rightarrow\Gamma_K:M\mapsto M\build{+}_{\widetilde{V}}^{ }\omega=(M*\omega)*\widetilde{V}.
 \end{equation}

It is obvious that we have 
$\psi(\widetilde{V})=\omega$. The fact  that $\psi$ transforms the addition $\build{+}_{\widetilde{V}}^{ }$ in the addition
$\build{+}_{\omega}^{ }$ is not an obvious fact. It comes from a general fact about elliptic curves, because $\psi$ is birational (see [10]). But in the particular case of the 
addition of a point in the Weierstrass' cubic, there is an elementary computer assisted proof, which is given in Appendix (the existence of an elementary
proof is asserted in [10], page 21, without proof).

So the initial addition 
of $H$ on $(\overline{{\cal C}_K},V)$ is conjugated by $\psi\circ\phi$ to the addition of
$\widetilde{\widetilde{H}}=\psi(\widetilde{H})$ on $(\Gamma_K,\omega)$. Let be $X(K)$ the abscissa of the point $\widetilde{\widetilde{H}}$. It is known
(see [2], where one uses the parametrization of $\Gamma_K$ in the complex field by the Weierstrass' function) that the number of rotation of $F$ restricted
to ${\cal C}_K^+$ is the number
$\theta_d(K)$ in
$]0,1/2[$ given by the following integral formula which permits to invert  the Weierstrass' function $\wp$

\begin{equation}
2\theta_d(K)=\frac{\int_{_0}^{\sqrt\frac{e_1-e_3}{\nu}}\frac{\textrm {d}u}{\sqrt{(1+u^2)(1+\varepsilon u^2)}}
}{\int_0^{+\infty}\frac{\textrm {d}u}{\sqrt{(1+u^2)(1+\varepsilon u^2)}}},
\end{equation}
where one has 
\begin{equation}
\nu:=X(K)-e_1>0\hskip 4mm \textrm {and} \hskip 4mm\varepsilon:=\frac{e_1-e_2}{e_1-e_3}>0
\end{equation}
 (functions of $K$). So we have proved the following result
\vskip 1mm
\begin{teo}\label{ConjRot}
For $d>0$ and $K\in]K_m,+\infty[$ the restriction of the map $F$ to ${\cal C}_K^+$ is conjugated to the rotation on the circle $\mathbb T$ with angle $2\pi\theta_d(K)\in]0,\pi[$ given by formula (23). The map $K\mapsto\theta_d(K)$ is analytic on $]K_m,+\infty[$.
\end{teo}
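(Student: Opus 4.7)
My plan is to chain the three reductions already prepared in the paper and then invoke the classical Weierstrass parametrization to identify the final map with a rotation. By Proposition~\ref{FasAdd}, $F$ on $\overline{{\cal C}_K}$ is the translation $M\mapsto M\build{+}_{V}^{ }H$. The projective isomorphism $\phi={\cal T}_3\circ{\cal T}_2\circ{\cal T}_1$ intertwines this with translation by $\widetilde H=\phi(H)$ on $(\Gamma_K,\build{+}_{\widetilde V}^{ })$, and $\psi$ further intertwines with translation by $\widetilde{\widetilde H}=\psi(\widetilde H)$ on $(\Gamma_K,\build{+}_{\omega}^{ })$. Setting $\Phi=\psi\circ\phi$, one obtains a real birational bijection of $\overline{{\cal C}_K}$ onto $\Gamma_K$ conjugating $F$ to translation by $\widetilde{\widetilde H}$ for the \emph{standard} chord-tangent law, and restricting to a homeomorphism of ${\cal C}_K^+$ onto one of the two real components of $\Gamma_K(\R)$.

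Next I would invoke the Weierstrass parametrization. By Lemma~\ref{ToutSurCubic} the cubic $\Gamma_K$ is nonsingular for $K>K_m$, and its three $x$-roots $e_1>e_2>e_3$ are real (they correspond via $\phi^{-1}$ to the three intersection points of ${\cal C}_K$ with the diagonal). Hence the period lattice is rectangular, $\Lambda=2\omega_1\mathbb Z\oplus 2\omega_3\mathbb Z$ with $\omega_1>0$ and $\omega_3\in i\R_*^+$, and $z\mapsto(\wp(z),\wp'(z))$ realizes a group isomorphism $\mathbb C/\Lambda\to\Gamma_K(\mathbb C)$ sending $0$ to $\omega$. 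Under this isomorphism the two real components of $\Gamma_K$ correspond respectively to the lines $\R/2\omega_1\mathbb Z$ (the unbounded branch) and $\omega_3+\R/2\omega_1\mathbb Z$ (the oval $\Gamma_K^+$). Since translation by $\widetilde{\widetilde H}$ preserves $\Phi({\cal C}_K^+)$, the point $\widetilde{\widetilde H}$ must correspond to a real parameter $u\in\R/2\omega_1\mathbb Z$, i.e.\ it lies on the unbounded branch with abscissa $X(K)=\wp(u)\geq e_1$; the translation then becomes $s\mapsto s+u$ on the circle parametrizing $\Phi({\cal C}_K^+)$, which after rescaling is the rotation of angle $2\pi\theta_d(K)$ with $\theta_d(K):=u/(2\omega_1)$. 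The bounds $\theta_d(K)\in\,]0,1/2[$ follow from Lemma~\ref{PasPeriodes234} together with the fact that the unique fixed point $L$ of $F$ does not belong to ${\cal C}_K^+$ when $K>K_m$: the rotation is neither trivial nor a half-turn, so $X(K)>e_1$ strictly and $\nu>0$.

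Finally, by Weierstrass inversion one has
\[
u=\int_{X(K)}^{+\infty}\frac{dt}{\sqrt{4(t-e_1)(t-e_2)(t-e_3)}},\qquad\omega_1=\int_{e_1}^{+\infty}\frac{dt}{\sqrt{4(t-e_1)(t-e_2)(t-e_3)}}.
\]
The substitution $t-e_1=(e_1-e_3)/s^2$ sends $t=+\infty,\,X(K),\,e_1$ to $s=0,\,\sqrt{(e_1-e_3)/\nu},\,+\infty$ respectively; a direct computation rewrites the common integrand as $\dfrac{-ds}{\sqrt{e_1-e_3}\,\sqrt{(1+s^2)(1+\varepsilon s^2)}}$ with $\varepsilon=(e_1-e_2)/(e_1-e_3)\in\,]0,1[$, and taking the ratio $u/(2\omega_1)$ then multiplying by $2$ produces formula (23). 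For the analyticity of $\theta_d$, the coefficients of $\Gamma_K$ are polynomial in $K$, the discriminant is non-vanishing on $]K_m,+\infty[$, so $e_1,e_2,e_3$ depend analytically on $K$; $X(K)$ is a rational function of $K$ through the explicit formulas for $\Phi$, hence also analytic; consequently $\nu$ and $\varepsilon$ are analytic and positive, and both elliptic integrals depend analytically on them.

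The only real technical point is identifying the substitution $t-e_1=(e_1-e_3)/s^2$ that brings the Weierstrass-form integrals into the symmetric form (23); every other step is a systematic specialization of the constructions built in Sections~\ref{sec:1}--\ref{sec:2} together with the classical $\wp$-parametrization of a real Weierstrass cubic with three real roots.
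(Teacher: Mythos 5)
Your argument is correct and follows essentially the same route as the paper: the paper obtains the conjugation to a rotation and formula (23) by citing [2] for the Weierstrass parametrization, so that its own proof of the theorem reduces to the analyticity remark, which you also give; your explicit derivation of (23) via the substitution $t-e_1=(e_1-e_3)/s^2$ simply fills in what the paper outsources, and your checks ($\widetilde{\widetilde H}$ lies on the unbounded branch, $\nu>0$, $\varepsilon\in\,]0,1[$) all hold. The only point to keep in mind is that the assertion that $\psi$ intertwines the two chord-tangent laws, which you treat as one of the ``prepared reductions,'' is exactly the non-obvious step the paper isolates and proves separately in its Appendix.
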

\vskip 1mm
\begin{proof} 
The only thing to prove is the analyticity, and it results easily from the integral formula (23) because all the parameters in the integrals are analytic functions of $K$.\hskip 4mm \qed
\end{proof}
\vskip 1mm
\section{The possible periods of periodic solutions of system (6)}\label{sec:4}

We will study the number of rotation $\theta_d(K)$ given by formula (23) for $K>K_m$. Our goal is to find the limit of the function
 $K\mapsto\theta_d(K)$ when $K\to+\infty$ and when $K\to K_m$.
\vskip 1mm
\subsection{The limit of $\theta_d(K)$ at $+\infty$}\label{subsec:4.1}

It is first necessary to have asymptotic expressions of the numbers $e_1,\,e_2,\,e_3$ which appear in formulas (23) and (24), and which become from
$f_1,\,f_2,\,f_3$.
\vskip 1mm
\begin{lem}\label{Devfi}
 For $K>K_m$, the points $F_i$, $i=1,\,2,\,3$ of ${\cal C}_K$ on the diagonal exist, and their coordinates $f_i$  satisfy the inequalities
\[-\frac{d}{2}<f_1<0<f_2<\ell<f_3<\frac{K}{2}.\]
 When $K\to+\infty$, we have the asymptotic developments
 \begin{equation}
 f_1\sim-\sqrt{\frac{d}{K}},\,\,f_2\sim\sqrt{\frac{d}{K}},\,\,f_3=\frac{K}{2}-\frac{2}{K}+o(\frac{1}{ K}),\,\,f_2-f_1\sim 2\sqrt{\frac{d}{ K}},\,\,2f_3-K\sim-\frac{4}{K}.
\end{equation} 
 \end{lem}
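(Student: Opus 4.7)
The plan is to identify the abscissae $f_i$ as the real roots of the cubic obtained by restricting the equation of ${\cal C}_K$ to the diagonal $x=y=t$, namely
\[P(t):=2t^3-Kt^2+2t+d=0,\]
and to derive everything from the coefficients of $P$.

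For existence and the positional bounds, I would test the sign of $P$ at the four abscissae $-d/2$, $0$, $\ell$, $K/2$. Direct substitution gives $P(-d/2)=-d^3/4-Kd^2/4<0$, $P(0)=d>0$, and $P(K/2)=K+d>0$. The crucial value at $\ell$ is computed using Lemma \ref{ptfixcrit}: the relation $\ell^3=\ell+d$ together with $K_m=(4\ell+3d)/\ell^2$ yields
\[P(\ell)=2\ell^3-K\ell^2+2\ell+d=(4\ell+3d)-K\ell^2=\ell^2(K_m-K)<0,\]
since $K>K_m$. Three applications of the intermediate value theorem then produce one real root in each of the intervals $(-d/2,0)$, $(0,\ell)$, $(\ell,K/2)$, and as $P$ is of degree three these exhaust its real roots. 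Thus $f_1,f_2,f_3$ exist and lie where claimed.

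For the asymptotic developments as $K\to+\infty$, I would apply Vieta's formulas to $P(t)/2=(t-f_1)(t-f_2)(t-f_3)$:
\[f_1+f_2+f_3=\frac{K}{2},\qquad f_1f_2+f_1f_3+f_2f_3=1,\qquad f_1f_2f_3=-\frac{d}{2}.\]
Since $f_1\in(-d/2,0)$ and $f_2\in(0,\ell)$ stay bounded, the first relation forces $f_3=K/2+O(1)$, and the third then gives $f_1f_2=-d/(2f_3)=O(1/K)$. Substituting these into the second yields $f_1+f_2=\bigl(1+d/(2f_3)\bigr)/f_3\sim 2/K$. Re-inserting into the first relation gives $f_3=K/2-2/K+o(1/K)$, and thus $2f_3-K\sim -4/K$. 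Finally, $f_1$ and $f_2$ are the roots of the quadratic $t^2-(f_1+f_2)t+f_1f_2=0$ whose discriminant equals $(f_1+f_2)^2-4f_1f_2\sim 4d/K$, so $f_2-f_1\sim 2\sqrt{d/K}$; since the sum $f_1+f_2=O(1/K)$ is of strictly smaller order than the difference $f_2-f_1=O(1/\sqrt K)$, we conclude $f_2\sim\sqrt{d/K}$ and $f_1\sim-\sqrt{d/K}$.

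I expect no real conceptual obstacle: the whole proof is a routine blend of the intermediate value theorem and Vieta's formulas. The only subtle point is the last step, where one must observe that $f_1+f_2$ is negligible compared with $f_2-f_1$; without this observation one could not separate the equivalents of $f_1$ and $f_2$.
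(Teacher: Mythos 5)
Your proof is correct and follows essentially the same route as the paper: the $f_i$ are identified as the roots of $2t^3-Kt^2+2t+d=0$ and the asymptotics are extracted from the three Vieta relations. Your explicit sign checks (in particular $P(\ell)=\ell^2(K_m-K)<0$) make the existence and ordering of the roots rigorous where the paper merely appeals to its figures, and your final observation that $f_1+f_2=O(1/K)$ is negligible against $f_2-f_1\sim 2\sqrt{d/K}$ correctly completes the separation of the equivalents of $f_1$ and $f_2$.
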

 \vskip 1mm
 \begin{proof}
 The $f_i$'s are the solutions of the equation $2t^3-Kt^2+2t+d=0$, which has two positive roots because $K>\min G$, and a negative
root. The inequalities of the lemma are obvious on Figures 1 and 2. Then it is easy to deduce (25) from the three relations
\[f_1f_2f_3=-\frac{d}{2},\,\,f_1f_2+f_2f_3+f_3f_1=1,\,\,f_1+f_2+f_3=\frac{K}{2}.\hskip 4mm\qed\]
 \end{proof}

Now we calculate the $e_i$'s.
\vskip 1mm
\begin{lem}\label{Formulaei}
We have the formula
\begin{equation}
e_i=\frac{\mu}{\l}\frac{f_i}{2f_i-K}+\frac{A}{12},
\end{equation}
 and in particular $e_3<e_2<e_1$.
 \end{lem}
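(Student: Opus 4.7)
The plan is to push the three diagonal points $F_i=(f_i,f_i,1)$ through the composite transformation $\phi = {\cal T}_3\circ{\cal T}_2\circ{\cal T}_1$ and read off the first coordinate of the image, which must lie on the $X$-axis (since $\phi$ maps the three diagonal points of $\overline{{\cal C}_K}$ to the three points of $\Gamma_K$ on the $X$-axis).

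First I would apply ${\cal T}_1$ to $F_i=(f_i,f_i,1)$. Because $y-x=0$ on the diagonal, the image has $Y=0$; explicitly one gets $(X,Y,T)=(f_i,\,0,\,2f_i-K)$. Next ${\cal T}_2$ rescales coordinates to give $(f_i/\lambda,\,0,\,(2f_i-K)/\mu)$, and passing to the affine chart where the third coordinate becomes $1$ yields first coordinate $\frac{\mu}{\lambda}\cdot\frac{f_i}{2f_i-K}$ (this uses that $2f_i-K\neq 0$, which I address below). Finally the horizontal translation ${\cal T}_3$ adds $A/12$, producing exactly
\[
e_i=\frac{\mu}{\lambda}\frac{f_i}{2f_i-K}+\frac{A}{12},
\]
which is the stated formula.

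For the inequality $e_3<e_2<e_1$, note that $\mu/\lambda=2(K+d)>0$, so the ordering of the $e_i$'s is the ordering of $g(f_i)$, where $g(t)=t/(2t-K)$. I must check first that $2f_i-K<0$ for $i=1,2,3$: for $f_1<0$ and $f_2<\ell$ this is clear since $\ell<K/2$ follows from $K>K_m$; for $f_3$, evaluating the polynomial $2t^3-Kt^2+2t+d$ at $t=K/2$ gives $K+d>0$, and since $f_3$ is the largest root of a cubic with positive leading coefficient, we must have $f_3<K/2$. Thus all three $f_i$ lie in the interval $(-\infty,K/2)$ where $g$ is defined and smooth.

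I would then compute $g'(t)=-K/(2t-K)^2<0$, so $g$ is strictly decreasing on $(-\infty,K/2)$. Combined with $f_1<f_2<f_3$ from Lemma \ref{Devfi}, this gives $g(f_1)>g(f_2)>g(f_3)$, hence $e_1>e_2>e_3$. No step is really an obstacle — the only mild care needed is confirming $f_3<K/2$ so that the denominators have a uniform sign; once that is in hand the formula and the ordering both drop out of elementary computation.
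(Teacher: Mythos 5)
Your proposal is correct and follows essentially the same route as the paper: push the diagonal points $(f_i,f_i,1)$ through ${\cal T}_3\circ{\cal T}_2\circ{\cal T}_1$ to read off formula (26), then use the strict decrease of $t\mapsto t/(2t-K)$ on $]-\infty,K/2[$ together with $f_1<f_2<f_3<K/2$ (the last inequality being already part of Lemma \ref{Devfi}) to obtain $e_3<e_2<e_1$. Your extra verification that $f_3<K/2$ is sound but not needed beyond what Lemma \ref{Devfi} states.
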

 \vskip 1mm
\begin{proof}
We take the images of the points $(f_i,f_i,1)$ by $\phi={\cal T}_3\circ{\cal T}_2\circ{\cal T}_1$, and obtain easily (26). Then the
decreasing monotony of the function $\ds x\mapsto\frac{x}{2x-K}$ on $\ds]-\infty,\frac{K}{2}[$ gives the final result.  \hskip 4mm \qed
\end{proof}
Now it is possible to obtain the asymptotic developments when $K\to+\infty$ of the parameters $\varepsilon$ and $\ds\sqrt{\frac{e_1-e_3}{\nu}}$ which
appear in the integrals of (23), and then to obtain the limit at $+\infty$ of $\theta_d(K)$.
\vskip 1mm
\begin{propo}\label{limitealinfini}
One has
\begin{equation}
\lim_{K\to+\infty}\theta_d(K)=\frac{3}{7}\,.
\end{equation}
\end{propo}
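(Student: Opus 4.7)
The plan is to combine formula (23) with careful asymptotic expansions of $e_1,e_2,e_3,X(K)$ as $K\to+\infty$. Both integrals in (23) will turn out to diverge logarithmically, and their leading $\ln K$ coefficients will stand in ratio $3/2:7/4$, forcing $2\theta_d(K)\to 6/7$.

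I would first establish the asymptotics of $\varepsilon$ and of $\nu$. From formula (26) of Lemma~\ref{Formulaei} combined with Lemma~\ref{Devfi}, one reads off $e_1-e_2\sim 4\sqrt d/\sqrt K$ and $e_1-e_3\sim K^3/4$, so that $\varepsilon\sim 16\sqrt d\,K^{-7/2}\to 0$. To handle $\nu$, I would compute $X(K)$ as follows. By (21) the points $\widetilde H$ and $\widetilde V$ share the same $X$-coordinate and have opposite $Y$-coordinates, so the vertical line through them meets $\Gamma_K$ also at $\omega$; hence $\widetilde H*\omega=\widetilde V$, and $\widetilde{\widetilde H}=\psi(\widetilde H)=\widetilde V*\widetilde V$ is the third intersection of the tangent to $\Gamma_K$ at $\widetilde V$. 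A direct computation with $X_0:=K+d+A/12=K^3/12+K/3$ and $Y_0=(K+d)K^{3/2}$ gives $12X_0^2-g_2=2K^3(K+d)$, slope $m=K^{3/2}$, and $X(K)=m^2/4-2X_0=K^3/12-2K/3$. These large terms cancel against $A/12$, yielding the clean identity
\[
\nu=X(K)-e_1=d-2(K+d)\frac{f_1}{2f_1-K},
\]
and the expansion $f_1\sim-\sqrt{d/K}$ then gives $\nu\to d>0$.

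With these in hand, $U:=\sqrt{(e_1-e_3)/\nu}\sim K^{3/2}/(2\sqrt d)\to+\infty$, while $\varepsilon U^2\sim 4/\sqrt{dK}\to 0$. For the numerator in (23), the inequalities $1\le 1+\varepsilon u^2\le 1+\varepsilon U^2$ on $[0,U]$ give
\[
\int_0^U\frac{du}{\sqrt{(1+u^2)(1+\varepsilon u^2)}}=\bigl(1+o(1)\bigr)\sinh^{-1}U\sim \tfrac32\ln K.
\]
For the denominator, the substitution $u=\tan\theta$ identifies the integral as the complete elliptic integral of the first kind of modulus $\sqrt{1-\varepsilon}$, and its classical boundary asymptotic $\sim\ln(4/\sqrt{1-k^2})$ as $k\to 1$ yields
\[
\int_0^{+\infty}\frac{du}{\sqrt{(1+u^2)(1+\varepsilon u^2)}}\sim -\tfrac12\ln\varepsilon\sim\tfrac74\ln K.
\]
Taking the ratio gives $2\theta_d(K)\to 6/7$, i.e.\ $\theta_d(K)\to 3/7$.

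The main obstacle I expect is the precise cancellation in $\nu$: the large $K^3/12-2K/3$ part of $X(K)$ matches exactly the large part of $A/12$, so to pin down the finite nonzero limit $\nu\to d$ one must carry the computation of both $X(K)$ (via the tangent-line formula on the Weierstrass cubic) and $e_1$ to subleading order. Without this precision one cannot verify $\varepsilon U^2\to 0$, which is what keeps the numerator equivalent to $\sinh^{-1}U$; the delicate matching of the two logarithmic rates, and hence the specific value $3/7$, would then break down.
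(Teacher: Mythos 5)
Your proof is correct and follows essentially the same route as the paper: the same asymptotics $\varepsilon\sim 16\sqrt d\,K^{-7/2}$, $e_1-e_3\sim K^3/4$, the same computation of $X(K)=K^3/12-2K/3$ via the tangent at $\widetilde V$ (the paper transports the tangent $x=0$ at $V$ through $\phi$ to get the slope $K^{3/2}$, you read it off the Weierstrass equation), and the same cancellation giving $\nu\to d$. The only difference is cosmetic: where the paper invokes Lemma~\ref{equivintegrale} for the logarithmic rates of the two integrals, you prove those rates inline (numerator via $\varepsilon U^2\to 0$ and $\sinh^{-1}U$, denominator via the classical $K(k)\sim\ln(4/k')$ asymptotic), arriving at the same ratio $\frac{3/2}{7/4}=\frac67$.
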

\begin{proof}
From formulas (16), (17) and (26), and relations (25), we have easily $\ds\varepsilon\sim\frac{16\sqrt {d}}{ K^{7/2}}$. We obtain also 
$\ds e_1-e_3\sim\frac{K^3}{4}$. Now it is necessary to get the value of $X(K)$. Let $\a$ be the first coordinate of $\widetilde{V}$, that is $\ds
\a=K+d+\frac{A}{12}$. The quantity $2\a+X(K)$ is the sum of the roots of the equation of degree 3 which express the abscissas of the intersection of
$\Gamma_K$ with its tangent at the point $\widetilde{V}$; let $Y=pX+q$ this tangent, the equation giving the abscissas of the intersections is
$(pX+q)^2=4X^3-g_2X-g_3$, that is $4X^3-p^2X^2+\cdots=0$. So we have $X(K)+2\a=p^2/4$. But the tangent at $\widetilde{V}$ to $\Gamma_K$ is the image by $\phi$ of
the tangent at $V$ to ${\cal C}_K$, the equation of which is $x=0$. After transformation we obtain for the final tangent the equation
$X-A/12=\l Y$; so  $p=1/\l=K^{3/2}$, and then $\ds X(K)+2\a=\frac{K^3}{4}$. So we have $\ds X(K)=K^3/4-2K-2d-\frac{1}{6}(K^3-8K-12d)=\frac{K^3}{12}-\frac{2}{3}K$.

Now easy calculations give $\ds\nu=X(K)-e_1=d-2\Big(1+\frac{d}{ K}\Big)\sqrt{\frac{d}{ K}}(1+o(1))\to d$ when $K\to+\infty$, and so
$\ds\frac{e_1-e_3}{\nu}\sim\frac{K^3}{4d}$.

If we take $\varepsilon\to 0$ as the variable, we have $\ds K\sim\frac{M_1}{\varepsilon^{2/7}}$, and
$\ds\sqrt{\frac{e_1-e_3}{\nu}}\sim\frac{M_2}{\varepsilon^{3/7}}$, with
$M_1$ and $M_2$ positive constants. At this point we can apply the following lemma of [2], and obtain the final result.  \hskip 4mm\qed
\end{proof}
\vskip 1mm
\begin{lem}\label{equivintegrale}
Let be $\l>0,\,\varepsilon>0,\,\beta>0$. For any map $\varepsilon\mapsto\psi(\varepsilon)=o(1)$ when $\varepsilon\to0$ and satisfying
$\l+\psi(\varepsilon)>0$, we put
\[N(\varepsilon,\l,\b)=\int_0^{\frac{\l+\psi(\varepsilon)}{\varepsilon^\b}}\frac{\textrm {d}u}{\sqrt{(1+u^2)(1+\varepsilon u^2)}}\,\,\,\,\textit {and}\,\,\,\,
D(\varepsilon)=\int_0^{+\infty}\frac{\textrm {d}u}{\sqrt{(1+u^2)(1+\varepsilon u^2)}}\,.\]
 Then, when $\varepsilon\to0$, we have
$D(\varepsilon)\sim(1/2)\ln(1/\varepsilon)$, and, if $\b<1/2$, $N(\varepsilon,\l,\b)\sim\b\ln(1/\varepsilon)$.
\end{lem}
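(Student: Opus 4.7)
Both asymptotics rest on a common observation: on the range where $(1+u^2)\sim u^2$ but $(1+\varepsilon u^2)\sim 1$, the integrand behaves like $1/u$ and contributes logarithmically. The proof consists in identifying the correct range and, for $D(\varepsilon)$, in extracting the sharp constant.

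\textbf{Analysis of $D(\varepsilon)$.} I split at $u=1$ and $u=1/\sqrt{\varepsilon}$. The piece on $[0,1]$ is bounded by $\int_0^1 du/\sqrt{1+u^2}$, hence uniformly in $\varepsilon$; the tail on $[1/\sqrt{\varepsilon},+\infty[$, via the substitution $v=\sqrt{\varepsilon}\,u$, becomes $\int_1^{+\infty} dv/\sqrt{(\varepsilon+v^2)(1+v^2)}\le\int_1^{+\infty}dv/v^2=1$, also bounded. All the mass thus lies in $I(\varepsilon):=\int_1^{1/\sqrt{\varepsilon}}du/\sqrt{(1+u^2)(1+\varepsilon u^2)}$, on which the integrand equals $\frac{1}{u}\bigl((1+1/u^2)(1+\varepsilon u^2)\bigr)^{-1/2}$. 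To pin down the constant $1/2$, I would introduce an auxiliary parameter $M>1$ and split further as $[1,M]\cup[M,1/(M\sqrt{\varepsilon})]\cup[1/(M\sqrt{\varepsilon}),1/\sqrt{\varepsilon}]$: the two flanking subintervals contribute $O(\ln M)$ uniformly in $\varepsilon$; on the central subinterval both $1/u^2$ and $\varepsilon u^2$ are at most $1/M^2$, so the second factor lies in $[1/(1+1/M^2),1]$. Since $\int_M^{1/(M\sqrt{\varepsilon})} du/u=(1/2)\ln(1/\varepsilon)-2\ln M$, this sandwiches $I(\varepsilon)/((1/2)\ln(1/\varepsilon))$ between $1/(1+1/M^2)+o_\varepsilon(1)$ and $1+o_\varepsilon(1)$. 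Letting first $\varepsilon\to 0$ and then $M\to+\infty$ yields $D(\varepsilon)\sim(1/2)\ln(1/\varepsilon)$.

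\textbf{Analysis of $N(\varepsilon,\lambda,\beta)$.} Write $L=L(\varepsilon):=(\lambda+\psi(\varepsilon))/\varepsilon^\beta$. The hypothesis $\beta<1/2$ gives $\varepsilon L^2=(\lambda+\psi(\varepsilon))^2\varepsilon^{1-2\beta}\to 0$, so $(1+\varepsilon u^2)$ tends to $1$ uniformly on $[0,L]$. Using the elementary inequality $0\le 1-1/\sqrt{1+x}\le x/2$ for $x\ge 0$,
\begin{equation*}
\Bigl|N(\varepsilon,\lambda,\beta)-\int_0^{L}\frac{du}{\sqrt{1+u^2}}\Bigr|\le\frac{\varepsilon}{2}\int_0^{L}\frac{u^2\,du}{\sqrt{1+u^2}}\le\frac{\varepsilon L^2}{4}=o(1).
\end{equation*}
Since $\int_0^L du/\sqrt{1+u^2}=\ln(L+\sqrt{1+L^2})=\beta\ln(1/\varepsilon)+\ln(\lambda+\psi(\varepsilon))+O(1)$ and $\ln(\lambda+\psi(\varepsilon))$ is bounded, one obtains $N(\varepsilon,\lambda,\beta)\sim\beta\ln(1/\varepsilon)$.

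\textbf{Main obstacle.} The $N$-estimate is routine once the sub-critical exponent $\beta<1/2$ is used to kill the $\varepsilon u^2$ factor. The real difficulty lies in $D(\varepsilon)$: crude sandwich bounds on $[1,1/\sqrt{\varepsilon}]$ only pin $D(\varepsilon)/\ln(1/\varepsilon)$ between $1/4$ and $1/2$, and it is the two-scale argument (letting the central subinterval fill the whole range as $M\to+\infty$ after $\varepsilon\to 0$) that forces the constant to equal exactly $1/2$.
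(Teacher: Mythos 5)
Your proof is correct. Note that the paper itself gives no proof of this lemma: it is quoted verbatim from reference [2] ("we can apply the following lemma of [2]"), so there is no internal argument to compare against. Your two estimates are sound: for $D(\varepsilon)$, the three-scale decomposition $[0,1]\cup[1,1/\sqrt{\varepsilon}]\cup[1/\sqrt{\varepsilon},+\infty[$ correctly isolates the logarithmic range, and the auxiliary parameter $M$ (with the order of limits $\varepsilon\to0$ first, then $M\to+\infty$) is exactly what is needed to upgrade the crude sandwich $[\tfrac14,\tfrac12]$ to the sharp constant $\tfrac12$; for $N(\varepsilon,\l,\b)$, the hypothesis $\b<1/2$ gives $\varepsilon L^2\to0$, your comparison with $\int_0^L du/\sqrt{1+u^2}=\ln\big(L+\sqrt{1+L^2}\,\big)$ is justified by the elementary inequality $0\le 1-1/\sqrt{1+x}\le x/2$, and the term $\ln(\l+\psi(\varepsilon))$ is indeed $O(1)$ since $\l>0$ and $\psi=o(1)$. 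The argument is self-contained and elementary, which is a genuine addition relative to the paper's bare citation.
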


\subsection{The limit of $\theta_d(K)$ at $K_m$}

It is known (see [4] or [12]) that we have the formula
\begin{equation}
\lim_{K\to K_m}\theta_d(K)=\frac{1}{2\pi}\arccos\Big(\frac{1}{2}\textrm {trace}\big(DF(L)\big)\Big),
\end{equation}
 where $DF$ is the jacobian matrix of $F$. With
formulas (2) and (8), it is easy to find the following result.
\vskip1mm
 \begin{propo}\label{limenKm}
 We have
 \begin{equation}
\theta_m(d):=\lim_{K\to
K_m}\theta_d(K)=\frac{1}{2\pi}\arccos\Big(\frac{1-2\ell^2-\ell^4}{2\ell^4}\Big)=\frac{1}{\pi}\arccos\Big(\frac{\ell^2-1}{2\ell^2}\Big),
 \end{equation}
  and the function
$d\mapsto\theta_m(d)$ is continuous on $]0,+\infty[$ and decreasing from $\ds\frac{1}{2}$ to $\ds\frac{1}{3}$ when $d$ increases from 0 to $+\infty$. We have 
  \begin{equation}
\textrm{ Im}(\theta_d)\supset\,\langle\frac{3}{7},\theta_m(d)\rangle:=\big]\min\Big(\frac{3}{7}, \theta_m(d)\Big),\max\Big(\frac{3}{7},
\theta_m(d)\Big)\big[.
 \end{equation} 
   One has the equivalence
   \begin{equation}
\theta_m(d)=[\textrm {resp.}<\textrm {or}>]\frac{3}{7}\iff d=[\textrm {resp.}>\textrm{ or}<]d_0:=\frac{2\sin(\pi/14)}{[1-2\sin(\pi/14)]^{3/2}}\approx1.076.
   \end{equation}
    When
$d=d_0$, we have $\ds\ell=\ell_0:=\frac{1}{\sqrt{1-2\sin\frac{\pi}{14}}}$.
\end{propo}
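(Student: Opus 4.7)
The plan is to apply formula (28) directly. First I would compute $\mathrm{tr}\,DF(L)$ from the expressions in (2) by implicit differentiation at $(x,y)=(\ell,\ell)$. Differentiating $Xx=1+d/y$ gives $\partial_x X=-X/x=-1$ and $\partial_y X=-d/(xy^2)$; differentiating $Yy=1+d/X$ gives $\partial_x Y=d/(Xxy)$ and $\partial_y Y=(-Y-(d/X^2)\partial_y X)/y$. Using the fixed-point relation $\ell^3=\ell+d$, equivalently $d/\ell^3=1-1/\ell^2$, the diagonal entries simplify to $\partial_x X=-1$ and $\partial_y Y=-1+d^2/\ell^6=(1-2\ell^2)/\ell^4$. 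The half-trace is therefore $(1-2\ell^2-\ell^4)/(2\ell^4)$, which is readily checked to lie in $[-1,-1/2]$ for $\ell>1$, so formula (28) applies and yields the first expression for $\theta_m(d)$.

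For the equivalent second expression, set $B:=(\ell^2-1)/(2\ell^2)=\tfrac12-\tfrac1{2\ell^2}$. A direct expansion gives $2B^2-1=(1-2\ell^2-\ell^4)/(2\ell^4)$. By Lemma \ref{ptfixcrit} one has $\ell>1$, whence $B\in[0,1/2)$, $\arccos B\in(\pi/3,\pi/2]$, and $2\arccos B\in(2\pi/3,\pi]$ lies in the principal range of $\arccos$. The double-angle identity then forces $\arccos((1-2\ell^2-\ell^4)/(2\ell^4))=2\arccos B$, giving the second stated formula for $\theta_m(d)$.

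For monotonicity and the boundary values, the implicit function theorem applied to $P(t,d)=t^3-t-d$ (with $P'_t(\ell)=3\ell^2-1>0$ since $\ell>1$) shows that $d\mapsto\ell(d)$ is analytic and strictly increasing on $]0,+\infty[$, with $\ell(0^+)=1$ and $\ell(d)\to+\infty$ as $d\to+\infty$. Since $\ell\mapsto B$ is strictly increasing and $\arccos$ is strictly decreasing, $d\mapsto\theta_m(d)$ is continuous and strictly decreasing on $]0,+\infty[$, with limits $\theta_m(0^+)=\pi^{-1}\arccos(0)=1/2$ and $\theta_m(+\infty)=\pi^{-1}\arccos(1/2)=1/3$.

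Finally, the inclusion (30) follows from the intermediate value theorem: Theorem \ref{ConjRot} ensures that $K\mapsto\theta_d(K)$ is continuous on $]K_m,+\infty[$, and its two one-sided limits are $\theta_m(d)$ (just computed) and $3/7$ (Proposition \ref{limitealinfini}); continuity produces every value strictly between these. The equivalence (31) is immediate from the strict monotonicity of $\theta_m$ established above. Solving $\theta_m(d_0)=3/7$ amounts to $B_0=\cos(3\pi/7)=\sin(\pi/14)$, whence $\ell_0^2=1/(1-2\sin(\pi/14))$ and $d_0=\ell_0^3-\ell_0=\ell_0(\ell_0^2-1)=2\ell_0^3\sin(\pi/14)=2\sin(\pi/14)/(1-2\sin(\pi/14))^{3/2}$. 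No step presents a serious obstacle; the only point requiring care is the branch check for the half-angle identity, which is settled by the elementary bound $\ell>1$.
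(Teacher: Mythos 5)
Your proposal is correct and follows essentially the same route the paper intends: apply the known formula (28), compute $\mathrm{tr}\,DF(L)$ from (2) using the fixed-point relation $\ell^3=\ell+d$ (the paper leaves this computation to the reader), then deduce the trigonometric identities, the monotonicity in $d$, the inclusion (30) by the intermediate value theorem, and the value of $d_0$. The branch check for the double-angle identity and the verification that the half-trace lies in $[-1,1]$ are exactly the points of care, and you handle them correctly.
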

\vskip 1mm
If $d\not=d_0$, the function $K\mapsto\theta_d(K)$ is not constant, because its image contains the interval
$\ds\langle\frac{3}{7},\theta_m(d)\rangle$. But if
$d=d_0$, it would be possible that $\theta_d$ be constant, equal to $3/7$, and in this case all the solutions of (6) would be 7-periodic. But it is not the
case.
\vskip 1mm
\begin{propo}\label{7noglobalperiod}
The number 7 is not a common period to all the solutions of (6).
\end{propo}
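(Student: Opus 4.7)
The plan is to argue by contradiction: suppose $7$ is a common period of all solutions of (6) when $d = d_0$. By Theorem~\ref{ConjRot}, for every $K > K_m$ the restriction of $F$ to ${\cal C}_K^+$ is conjugated to a rotation of angle $2\pi\theta_{d_0}(K) \in (0,\pi)$, so the hypothesis forces $\theta_{d_0}(K) \in \{1/7, 2/7, 3/7\}$ for each $K > K_m$; by continuity of $K \mapsto \theta_{d_0}(K)$ (guaranteed by the analyticity in Theorem~\ref{ConjRot}), this function is constant on $(K_m, +\infty)$. Since it tends to $3/7$ both as $K \to K_m^+$ (Proposition~\ref{limenKm} together with the definition of $d_0$ in (31), which gives $\theta_m(d_0) = 3/7$) and as $K \to +\infty$ (Proposition~\ref{limitealinfini}), the constant value must be exactly $3/7$.

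To reach a contradiction I will refine the proof of Proposition~\ref{limitealinfini} by one order. For the denominator of (23), the substitution $u = \tan\theta$ identifies $D(\varepsilon)$ with the complete elliptic integral $\int_0^{\pi/2} d\theta/\sqrt{1-(1-\varepsilon)\sin^2\theta}$, whose classical behaviour near modulus $1$ gives $D(\varepsilon) = \frac{1}{2}\ln(16/\varepsilon) + o(1)$ as $\varepsilon \to 0$. For the numerator, one has $\varepsilon U^2 \to 0$ with $U := \sqrt{(e_1-e_3)/\nu} \sim K^{3/2}/(2\sqrt d)$ (from $e_1 - e_3 \sim K^3/4$ and $\nu \to d$, both established in the proof of Proposition~\ref{limitealinfini}); expanding $(1+\varepsilon u^2)^{-1/2}$ in the integrand and integrating then yields $N(\varepsilon) = \operatorname{arcsinh}(U) + o(1) = \ln(2U) + o(1)$.

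Combining these with $\varepsilon \sim 16\sqrt d/K^{7/2}$ from the proof of Proposition~\ref{limitealinfini}, a short calculation of $N(\varepsilon)/D(\varepsilon)$ gives the refined asymptotic
\[
\theta_d(K) = \frac{3}{7} - \frac{4\ln d}{49\,\ln K} + o\!\left(\frac{1}{\ln K}\right) \qquad (K \to +\infty).
\]
Formula (31) gives $d_0 \approx 1.076$, hence $\ln d_0 > 0$ and the coefficient of $1/\ln K$ is strictly negative; in particular $\theta_{d_0}(K) < 3/7$ for all sufficiently large $K$, contradicting constancy. Thus $\theta_{d_0}$ is not identically $3/7$ and $7$ is not a common period for all solutions of (6).

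The main obstacle is carrying both integral asymptotics one order beyond leading while keeping the $d$-dependence explicit. For $D(\varepsilon)$ this is the standard expansion of the complete elliptic integral $\mathbf{K}(k)$ near $k=1$; for $N(\varepsilon)$ the expansion is elementary once $\varepsilon U^2 \to 0$ is verified from the $K$-asymptotics of $\varepsilon$ and $U$ already computed in Proposition~\ref{limitealinfini}.
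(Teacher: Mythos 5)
Your proof is correct, and it takes a genuinely different route from the paper's. The paper argues algebraically and computationally: from $F^7=\mathrm{Id}$ and the symmetry relation (4) it deduces $F^4\circ S=S\circ F^3$, applies this to the diagonal point $(1,1)$ to obtain the condition $u_3v_3^2-v_3-d=0$, and a computer calculation shows the resulting equation in $d$ has its positive root at $\approx 1.073\neq d_0\approx 1.076$. You instead push the asymptotics of Proposition \ref{limitealinfini} one order further: the identification $D(\varepsilon)=\int_0^{\pi/2}\mathrm{d}\theta/\sqrt{1-(1-\varepsilon)\sin^2\theta}=\tfrac12\ln(16/\varepsilon)+o(1)$ and the elementary estimate $N=\mathrm{arcsinh}(U)+o(1)=\ln(2U)+o(1)$ with $U=\sqrt{(e_1-e_3)/\nu}\sim K^{3/2}/(2\sqrt d)$ are both legitimate (the error control in $N$ works because $\varepsilon U^2\sim 4/\sqrt{dK}\to0$), and since $\varepsilon$, $e_1-e_3$, $\nu$ are known up to a factor $1+o(1)$ their logarithms are known up to additive $o(1)$; the quotient then gives $\theta_d(K)=\tfrac37-\tfrac{4\ln d}{49\ln K}+o(1/\ln K)$, which I have checked. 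As $d_0>1$, the correction term is strictly negative, so $\theta_{d_0}$ cannot be identically $3/7$, contradicting the constancy forced by continuity and the discreteness of $\{1/7,2/7,3/7\}$; the first paragraph of your argument correctly reduces the whole statement to this. What your approach buys: it avoids the computer-assisted orbit computation and the numerically delicate distinction between $1.073$ and $1.076$, using only the leading asymptotics already established in the paper, the standard expansion of the complete elliptic integral near modulus $1$, and the easily certified inequality $d_0>1$; as a by-product it recovers quantitatively, near $K=+\infty$, the one-sided bound $\theta_{d_0}(K)<3/7$ that the paper states qualitatively in Remark \ref{Thetad0inf3/7}. What the paper's approach buys: it stays at the level of the difference equation itself and exhibits the exact value of $d$ for which the particular orbit of $(1,1)$ is $7$-periodic.
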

\begin{proof}
If 7 would be a common period to every solution of (6), we would have $F^7=Id$, and so by (4), since $F^{-3}=S\circ F^3\circ S$, we
would have the relation $F^4\circ S=S\circ F^3$. If we take $u_0=v_0$, we would have $(u_4,v_4)=(v_3,u_3)$, and this relation gives $u_3v_3^2-v_3-d=0$. We
start with $(u_0,v_0)=(1,1)$, we calculate $(u_3,v_3)$, and the previous relation gives an equation for number $d$. With a computer, this equation has for
positive solution
$d\approx1.073$, which is different from $d_0$. This contradiction proves that when $d=d_0$ the solution with initial point $(1,1)$ is not 7-periodic. \hskip 4mm\qed
\end{proof}

\begin{corol}\label{notonetoone}
The map $K\mapsto\theta_{d_0}(K)$ is non constant and not one-to-one. There exists an open interval $I$ containing $d_0$ such that for
each $d\in I$ the map $\theta_d$ is not one-to-one and not constant.
\end{corol}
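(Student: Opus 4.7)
The plan is to exploit the fact that both boundary limits of $\theta_{d_0}$ equal $3/7$: by Proposition~\ref{limitealinfini} the limit at $+\infty$ is $3/7$, and by (31) one has $\theta_m(d_0)=3/7$, so the limit at $K_m$ is also $3/7$. If $\theta_{d_0}$ were constant on $\,]K_m,+\infty[\,$, it would therefore be identically equal to $3/7$, forcing every solution of (6) at $d=d_0$ to be $7$-periodic via Theorem~\ref{ConjRot}; this contradicts Proposition~\ref{7noglobalperiod}. So I can pick some $K_1>K_m$ with $\theta_{d_0}(K_1)\neq 3/7$, which already shows $\theta_{d_0}$ is non-constant.

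Assume $\theta_{d_0}(K_1)>3/7$ (the opposite inequality being symmetric), and fix any $v$ strictly between $3/7$ and $\theta_{d_0}(K_1)$. By Theorem~\ref{ConjRot}, $\theta_{d_0}$ is continuous on $\,]K_m,+\infty[\,$; both boundary limits equal $3/7<v$ while $\theta_{d_0}(K_1)>v$, so the intermediate value theorem, applied separately on $\,]K_m,K_1[\,$ and on $\,]K_1,+\infty[\,$, produces two distinct points $K',K''$ with $\theta_{d_0}(K')=\theta_{d_0}(K'')=v$. Hence $\theta_{d_0}$ is not one-to-one.

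For the open interval $I$, the plan is to read off from formula (23) the joint continuity of $(d,K)\mapsto\theta_d(K)$ in a neighbourhood of $(d_0,K_1)$ and in a neighbourhood of $+\infty$: the ordered roots $e_1>e_2>e_3$ of the cubic (19) vary continuously in $(d,K)$ as long as the cubic is non-singular, which holds for $K>K_m(d)$ by Lemma~\ref{ToutSurCubic}(1), and the auxiliary quantities $X(K)$, $\nu$, $\varepsilon$ are explicit. Combined with continuity of $d\mapsto\theta_m(d)$ from Proposition~\ref{limenKm}, this yields an open neighbourhood $I$ of $d_0$ on which $\theta_d(K_1)>v$, $\theta_m(d)<v$ and $\lim_{K\to+\infty}\theta_d(K)=3/7<v$ hold simultaneously. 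The same IVT sandwich then applies to each $\theta_d$ with $d\in I$, and since $\theta_d(K_1)>v>\theta_m(d)$ one also gets non-constancy on $\,]K_m(d),+\infty[\,$.

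The main (though mild) obstacle is the joint continuity of $(d,K)\mapsto\theta_d(K)$ near $d_0$: one must check that $K_m(d)$, the roots $e_i(d,K)$ and $X(d,K)$ depend continuously on $d$ uniformly enough to pass to the limit in the integrals of (23). Everything else is a standard IVT pinch between one interior value strictly on one side of $v$ and two boundary limits strictly on the other side.
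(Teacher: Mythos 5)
Your proposal is correct and follows essentially the same route as the paper: non-constancy of $\theta_{d_0}$ from Proposition~\ref{7noglobalperiod} (since both boundary limits equal $3/7$), and then an intermediate-value "pinch" of an interior value against the two boundary limits, stabilized for $d$ near $d_0$ by continuity of $d\mapsto\theta_d(K_1)$ and $d\mapsto\theta_m(d)$. The only difference is that you spell out the joint continuity in $(d,K)$ that the paper dismisses as obvious.
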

\begin{proof}
The first assertion results from Proposition \ref{7noglobalperiod}. For the second, suppose for example that $K\mapsto\theta_{d_0}(K)$ has a maximum
$M_0>3/7$ attained at $K_0>K_m\,$. Since $d\mapsto\theta_d(K_0)$ and $d\mapsto\theta_m(d)$ are obviously continuous, it exists $\eta>0$ such that for every
$d\in]d_0-\eta,d_0+\eta[$ we have $\theta_d(K_0)>(M_0+3/7)/2$ and
$\theta_m(d)<(M_0+3/7)/2$. Since $\ds\lim_{K\to+\infty}\theta_d(K)=3/7<(M_0+3/7)/2$, the function $\theta_d$ attains the value $(M_0+3/7)/2$ twice (at
least), and is not one-to-one. \hskip 4mm\qed 
\end{proof}
\vskip 1mm
\noindent \textbf {Problem.} Is the function $K\mapsto\theta_d(K)$ one-to-one if $\vert d-d_0\rvert$ is sufficiently large ?

\subsection{The possible periods of periodic solutions}\label{subsec:4.3}

We know from Theorem \ref{ConjRot} that the restriction of $F$ to ${\cal C}_K^+$ is conjugated to a rotation on the circle with angle $2\pi\theta_d(K)$. So,
if $\theta_d(K)$ is rational and equal to $\ds\frac{p}{ q}$ irreducible fraction, then the solutions with starting points on ${\cal C}_K^+$ are $q$-periodic,
with $q$ as minimal period, and the reciprocal is true. 

In the contrary, if  $\theta_d(K)$ is irrational, then every point of ${\cal C}_K^+$ has its orbit under the action of $F$ which is dense in ${\cal C}_K^+$,
and the converse is true.

How are distributed this two types of points, for a given $d$ ? What periods can appear, for a given $d$ ? The answers are given by the following result.

\begin{teo}\label{twodensesets}
Let $d$ be positive.

\noindent(1) It exists a partition of $\mathbb R_*^{+^2}\setminus\{L\}$ in two dense sets $A$ and $B$, each of them union of invariant curves ${\cal
C}_K^+$, such that every point in $A$ is periodic and every point in B has a dense orbit in the positive part of the cubic which passes through it.

\noindent(2) It exists an integer $N(d)$ such that every integer $q\geq N(d)$ is the minimal period of some solution of (6).
\end{teo}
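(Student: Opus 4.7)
The strategy rests on Theorem~\ref{ConjRot}: the restriction of $F$ to each positive cubic component ${\cal C}_K^+$ is conjugate to a rotation of the circle by angle $2\pi\theta_d(K)$. I define
\[A := \bigcup_{\theta_d(K)\in\mathbb Q}{\cal C}_K^+, \qquad B := \bigcup_{\theta_d(K)\notin\mathbb Q}{\cal C}_K^+,\]
and invoke the classical dichotomy for rotations: each point of $A$ is periodic, with minimal period equal to the denominator of $\theta_d(K)$ written in lowest terms, while each point of $B$ has an orbit dense in its invariant cubic ${\cal C}_K^+$. That $\{A,B\}$ partitions $\mathbb R_*^{+^2}\setminus\{L\}$ into unions of invariant curves is then immediate.

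For the density of $A$ and $B$, I use that $K\mapsto\theta_d(K)$ is real-analytic (Theorem~\ref{ConjRot}) and non-constant on $]K_m,+\infty[$: non-constance holds for $d\neq d_0$ by comparing the distinct limits $3/7$ of Proposition~\ref{limitealinfini} and $\theta_m(d)$ of Proposition~\ref{limenKm}, and for $d=d_0$ by Proposition~\ref{7noglobalperiod}. Given a point $(x_0,y_0)\in\mathbb R_*^{+^2}\setminus\{L\}$ and an open neighborhood $U$, the map $G$ (a submersion away from its unique critical point $L$) sends $U$ onto an open interval $I$ around $G(x_0,y_0)$. By the real-analytic identity theorem, $\theta_d\!\!\restriction_I$ is non-constant; by continuity its image is a non-degenerate subinterval of $(0,1/2)$, which meets both $\mathbb Q$ and $\mathbb R\setminus\mathbb Q$. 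Choosing a rational (resp.\ irrational) value in this image gives $K\in I$ with $\theta_d(K)$ rational (resp.\ irrational), and hence a point of $U\cap A$ (resp.\ $U\cap B$).

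For assertion~(2), the same reasoning shows that the image $\theta_d(]K_m,+\infty[)$ is a non-degenerate interval, so contains some open $(a,b)\subset(0,1/2)$. Any irreducible fraction $p/q\in(a,b)$ corresponds, via a $K$ with $\theta_d(K)=p/q$, to a solution of (6) with minimal period exactly $q$, so it suffices to show that every sufficiently large $q$ admits $p$ with $\gcd(p,q)=1$ and $p/q\in(a,b)$. Standard Möbius inversion gives
\[\#\{p\in(qa,qb)\,:\,\gcd(p,q)=1\}=(b-a)\,\phi(q)+O(\sigma_0(q)),\]
and since $\phi(q)\to\infty$ while $\sigma_0(q)=O(q^\varepsilon)$, this quantity is positive for all $q$ exceeding some threshold $N(d)$; this $N(d)$ is the required integer.

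The only conceptual obstacle is the non-constance of $\theta_d$, already secured by the previous subsections. Everything else is a clean combination of the analytic identity theorem, continuity of $G$ at non-equilibrium points, and elementary coprime counting; no new geometric input is needed beyond Theorem~\ref{ConjRot}.
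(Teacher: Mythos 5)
Your proof is correct, and part (1) is essentially the paper's argument: both rest on the analyticity and non-constancy of $K\mapsto\theta_d(K)$ (non-constancy for $d=d_0$ coming from Proposition~\ref{7noglobalperiod}) together with the density of rationals and irrationals in a non-degenerate subinterval of the image; you phrase the density locally via $G$ being a submersion off $L$, while the paper works globally with the preimages $\theta_d^{-1}(]a,b[\cap\mathbb Q)$ and $\theta_d^{-1}(]a,b[\cap(\mathbb R\setminus\mathbb Q))$, but these are the same idea. Part (2) is where you genuinely diverge: you count \emph{all} integers $p\in]qa,qb[$ coprime to $q$ by M\"obius inversion, getting $(b-a)\phi(q)+O\big(2^{\omega(q)}\big)$, which is positive for large $q$ since $\phi(q)\to\infty$ and the error is $O(q^{\varepsilon})$. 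The paper instead counts \emph{primes} $p\in]qa,qb[$ not dividing $q$, via the prime number theorem and Robin's bound $\omega(q)\leq 1.38402\,\ln q/\ln\ln q$. Your totient count is more elementary and arguably the more natural tool for a pure existence statement; the paper's prime-based count is chosen because it is reused quantitatively in Theorem~\ref{LesPeriodes}, where the explicit interval $]1/3,1/2[$ and Rosser--Schoenfeld bounds turn the same argument into concrete thresholds. Either route legitimately yields the required $N(d)$.
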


\begin{proof}
\noindent\textbf{(1)} Put $\ds]a,b[:=\build{\textrm {Im}(\theta_d)}_{ }^{o}$. This interval is not empty ($a<b$) if $d\not=d_0$ because
$\theta_m(d)\not=3/7$, and if
$d=d_0$ because $K\mapsto\theta_{d_{\,0}}(K)$ is not constant. Since $K\mapsto\theta_d(K)$ is analytical, the function $\theta_d$ is constant on no
not-empty interval of
$]K_m,+\infty[$. So from the density of rational and irrational numbers in $]a,b[$ it results easily the density of the two sets
$\theta_d^{-1}(]a,b[\cap\mathbb Q)$ and $\theta_d^{-1}\big(]a,b[\cap(\mathbb R\setminus\mathbb Q)\big)$ in $]K_m,+\infty[$. From this, one see that the two sets
union of the curves ${\cal C}_K^+$ for $K$ in the two previous dense sets are dense.

\noindent\textbf{(2)} The set of minimal periods of periodic solutions is exactly the set of integers $q$ such that it exists a natural number
$p$ such that $\ds\frac{p}{q}$ is in $\textrm {Im}(\theta_d)$ and irreducible. So, if $\ds\frac{p}{q}$ lies in $]a,b[$ and is irreducible, $q$ is the minimal
period of some solution of (6). For finding such irreducible fractions, we fix the integer $q$ (the eventual period) and search for $p$ a \textit {prime number}
in the interval $]qa,qb[$, which is not a factor of $q$. It is known that the number of the distinct prime factors of a number $q$ is majorized by 
$\ds1.38402\,\frac{\ln q}{\ln(\ln q)}$ (see [9]). Denote $\pi(x)$ the cardinal of prime numbers not greater than $x$. So the number $P(q)$ of prime integers
between
$qa$ and
$qb$ which do not divide $q$ is at least
\[\pi(qb-1)-\pi(qa)-1.38402\,\frac{\ln q}{\ln(\ln q)}.\]
 From the prime number theorem (see [11]) we have 
$\ds c(q)\frac{q}{\ln q}\leq\pi(q)\leq C(q)\frac{q}{\ln q}$, with $c(q)\leq1\leq C(q)$  and $\ds\lim_{q\to+\infty}c(q)=\lim_{q\to+\infty}C(q)=1$. So we have 
\[\begin{aligned}P&\geq c(qb-1)\frac{qb-1}{\ln(qb-1)}-C(qa)\frac{qa}{\ln(qa)}-1.38402\frac{\ln q}{\ln(\ln q)}\\
&=\big(c(qb-1)b-C(qa)a\big)\frac{q}{\ln q}(1+\eta(q))
\end{aligned}\]
  where $\eta(q)\to0$ when $q\to+\infty$. So it exists a number $N(d)$ sufficiently large such that for every $q\geq N(d)$ one has $P(q)\geq1$, and so there exists a prime number $p$
such that
$\ds\frac{p}{ q}\in]a,b[$ and $p$ does not divide $q$. Thus $q$ is the minimal period of some solution of (6). \hskip 4mm \qed
  \end{proof}
  \vskip 1mm 
  Now we will search the set of integers which are minimal period of some solution of (6) for some value of $d>0$. The principle is the same, and
analogous to this of [2],  but the interval is now explicit :$]1/3,1/2[$, and so it is possible to improve the inequalities in the using of prime number theorem.
\vskip 1mm
\begin{teo}\label{LesPeriodes}
Every integer $q\geq11$ is the minimal period of some solution of system (6) for some $d>0$. Between 2 and 10, integers 2, 3, 4, 6, 10 are
minimal period of no non-constant solution of (6), for no $d$, the others: 5, 7, 8, 9 are minimal periods.
\end{teo}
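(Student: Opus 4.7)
The plan is to translate the periodicity condition via Theorem \ref{ConjRot}---an orbit on ${\cal C}_K^+$ has minimal period $q$ iff $\theta_d(K)=p/q$ in lowest terms for some integer $p$ coprime with $q$---and then to identify the realisable $q$ by analyzing the global image $T:=\bigcup_{d>0}\mathrm{Im}(\theta_d)$. From Propositions \ref{limitealinfini} and \ref{limenKm}, together with the continuous decreasing bijection $d\mapsto\theta_m(d)$ of $]0,+\infty[$ onto $]1/3,1/2[$, each $\mathrm{Im}(\theta_d)$ contains the open interval $\langle 3/7,\theta_m(d)\rangle$ and is contained in $]1/3,1/2[$, so varying $d$ already covers $]1/3,1/2[\setminus\{3/7\}$. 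To capture the remaining value $3/7$, I would invoke Corollary \ref{notonetoone}: for $d>d_0$ chosen in the interval $I$ of the corollary, the proof of that corollary shows $\theta_d$ attains some value $>3/7$, while $\theta_m(d)<3/7$ forces $\theta_d$ to take values $<3/7$ near $K_m$; the intermediate value theorem then yields $\theta_d(K)=3/7$ for some $K$. This establishes $T=\,]1/3,1/2[$.

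For $2\leq q\leq10$, Lemma \ref{PasPeriodes234} disposes of $q\in\{2,3,4\}$. For the remaining values I would enumerate the irreducible fractions $p/q$ in $]1/3,1/2[$: they are $2/5,\,3/7,\,3/8,\,4/9$ for $q=5,7,8,9$ respectively, whereas for $q=6$ the only coprime candidates are $1/6,5/6$ and for $q=10$ they are $1/10,3/10,7/10,9/10$, none of which lie in $]1/3,1/2[$. Combined with $T=\,]1/3,1/2[$ from the first paragraph, this simultaneously realizes $q\in\{5,7,8,9\}$ and excludes $q\in\{6,10\}$.

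For $q\geq11$ it suffices to exhibit one integer $p\in\,]q/3,q/2[$ coprime with $q$, because then $p/q\in\,]1/3,1/2[=T$. I would handle the initial values (say $11\leq q\leq 24$) by direct case-by-case verification, and for larger $q$ adapt the prime-counting argument used in the proof of Theorem \ref{twodensesets}(2): the interval $]q/3,q/2[$ has length $q/6$ and contains about $q/(6\ln q)$ primes by the prime number theorem, while $q$ has at most $1.38402\,\ln q/\ln\ln q$ distinct prime divisors, so for $q$ sufficiently large there is always a prime $p\in\,]q/3,q/2[$ that does not divide $q$ and is therefore coprime with $q$. The main obstacle in the whole argument is the inclusion $3/7\in T$ of the first paragraph: since $3/7$ is the common asymptotic limit of every $\theta_d$ at $K\to+\infty$, it is only captured by genuinely exploiting the non-monotonicity of $\theta_d$ near $d=d_0$ established in Corollary \ref{notonetoone}.
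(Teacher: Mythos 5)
Your first paragraph and your treatment of $q\geq11$ and $q\in\{5,8,9\}$ follow essentially the paper's Step (1): show that $\bigcup_{d>0}\textrm{Im}(\theta_d)$ contains $]1/3,1/2[\,\setminus\{3/7\}$ and then produce, for each such $q$, an irreducible fraction $p/q$ in that interval, using a prime-counting bound for large $q$ and case-checking for small $q$. That part is sound. Your route to $3/7\in\bigcup_d\textrm{Im}(\theta_d)$ via the intermediate value theorem is also legitimate in principle --- it is exactly what the paper sketches in Remark \ref{Thetad0inf3/7} --- but your specific implementation is backwards: by that same remark one has $\theta_{d_0}(K)\leq3/7$ for all $K$, so $\theta_{d_0}$ never attains a value $>3/7$ and the case of Corollary \ref{notonetoone} you invoke is vacuous. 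What saves the argument is a two-case analysis: since $\theta_{d_0}$ is non-constant with both boundary limits equal to $3/7$, it attains either a value $>3/7$ (then take $d>d_0$ nearby, where $\theta_m(d)<3/7$) or a value $<3/7$ (then take $d<d_0$ nearby, where $\theta_m(d)>3/7$); either way IVT gives $3/7$ in some $\textrm{Im}(\theta_d)$. As written, your choice ``$d>d_0$'' combined with ``$\theta_d$ attains some value $>3/7$'' is not justified. (The paper instead realizes $q=7$ geometrically in its Step (2), by solving $4H=D$ and exhibiting the exact range $1<d<d_0$.)

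The genuine gap is the exclusion of $6$ and $10$. You assert that each $\textrm{Im}(\theta_d)$ is \emph{contained} in $]1/3,1/2[$, i.e.\ that $T=\,]1/3,1/2[$, and deduce that no irreducible $p/6$ or $p/10$ can be a rotation number. But nothing in the paper, and nothing in your argument, gives the lower bound $\theta_d(K)>1/3$: the only a priori constraint from formula (23) is $\theta_d(K)\in\,]0,1/2[$, and knowing that the two boundary limits $\theta_m(d)$ and $3/7$ lie in $]1/3,1/2[$ controls only the interval between them, not the whole image ($\theta_d$ is proved non-monotone for $d$ near $d_0$, and its global behaviour is explicitly left open). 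So the values $1/6$, $1/10$, $3/10$ (and for that matter $1/3$, $1/4$, $1/5$) cannot be ruled out by enumerating irreducible fractions in $]1/3,1/2[$. This is precisely why the paper devotes its Steps (3) and (4) to a separate geometric argument on the chord-tangent group law: $2nH=V$ iff $B$ lies on the tangent to $\overline{{\cal C}_K}$ at $nH$, which is excluded by locating $3H$ and $5H$ and using the convexity and inflection-point analysis of Lemmas \ref{FormCubic}, \ref{relation-nH} and \ref{Inflections} (while $2,3,4$ are handled by the direct computation of Lemma \ref{PasPeriodes234}, which you do cite). Without a substitute for these steps, your proposal does not establish that $6$ and $10$ fail to be minimal periods.
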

\vskip 1mm
\begin{proof}

The proof is long, and we split it in four steps.
\vskip 1mm
\noindent\textbf {Step (1) of the proof.} First, we have, from Proposition \ref{limenKm}, the relation
\begin{equation}
\bigcup_{d>0}{\rm Im}(\theta_d)\supset]1/3,1/2[\setminus\{3/7\}.
\end{equation}
 Thus, for searching if an integer $q$ is a minimal period of some solution of
(6) for some $d>0$, it suffices to find some prime number $p\in]q/3,q/2[$ which is not a factor of $q$. There is an exception if $p/q=3/7$, but the only
possibility for this is $q=7$ and $p=3$. So in the following we suppose $q\not=7$.

We use an improvement of the prime number theorem, due to Rosser and Schoenfeld (see [11]):
\[\textrm {For}\,\,q\geq52,\,\,\,\frac{q}{\ln q}\leq\pi(q)\leq\Big(1+\frac{3}{2\ln q}\Big)\frac{q}{\ln q}.\]
So if the function
\begin{equation}
f(q):=\frac{(q/2)-1}{\ln((q/2)-1)}-\frac{q/3}{\ln(q/3)}\Big(1+\frac{3}{2\ln(q/3)}\Big)-1,38402\frac{\ln q}{\ln(\ln q)}-1
\end{equation}
 is positive for some $q\geq52$,
it exists $p\in]q/3,q/2[$ which is prime and does not divide $q$. Thus $q$ is a prime period of some solution of (6) for some $d>0$. Of course the
equivalent to $f(q)$ when $q\to+\infty$ is $\ds (1/6)\frac{q}{\ln q}$, so it is true that $f(q)>0$ for $q$ sufficiently large. But we wish to have a quantitative version of this.
 \vskip 1mm
We put

\[ f(x):=\frac{x/2-1}{\ln(x/2-1)}-\Big(1+\frac{3}{2\ln(x/3)}\Big)\frac{x/3}{\ln(x/3)}-1.38402\frac{\ln x}{\ln(\ln x)}-1.\]

We have $f(780)<0$ and $f(781)>0$. From the graph of $f$ on a computer, it seems that for every $x\geq781$ we have $f(x)>0$.
We give a mathematical proof of this fact.
\vskip 1mm
\noindent(a) We define the function $\ds x\mapsto g_k(x):=k\frac{x}{\ln x}-1.38402\frac{\ln x}{\ln(\ln x)}-1$, for $k>0$ to be chose. We study the monotonicity of $g_k$. 
We put $u:=\ln x$, and so have
\[g_k(x):=h_k(u)=k\frac{e^{u}}{u}-1.38402\frac{u}{\ln u}-1,\]
and choose $u\geq u_0$, that is $x\geq x_0:=e^{u_0}\geq52$ (and so $u_0\geq2$). We have
\[h_k'=ke^{u}\frac{u-1}{u^2}-1.38402\frac{\ln u-1}{\ln^2 u}\geq k(u_0-1)\frac{e^{u}}{u^2}-1.38402\frac{1}{\ln u}\]
and so we have

\[\frac{1}{k}\leq\frac{(u_0-1)e^{u}\ln u}{1.38402 u^2}:=\phi(u)\Longrightarrow h_k'(u)>0 .\]
\vskip 1mm
\noindent(b) Now we study the monotonicity of the function $\phi$. First, $\ds \big(\frac{e^{u}}{u^2}\big)'=\frac{e^{u}(u-2)}{u^3}\geq0$ if $u\geq2$. So $\phi$ is increasing
and we have
\[ \frac{1}{k}\leq\phi(u_0)\Longrightarrow \forall  u\geq u_0,\,\, h_k'(u)>0 ,\]
and so $g_k(x)>0$ for $x\geq x_0$ if $k$ satisfies the previous inequality and
$g_k(x_0)>0$.
\vskip 1mm
\noindent(c) Now we search a condition which will imply the inequality
\[\forall x\geq x_0,\,\,\frac{x/2-1}{\ln(x/2-1)}-\Big(1+\frac{3}{2\ln(x/3)}\Big)\frac{x/3}{\ln(x/3)}\geq k\frac{x}{\ln x}\Longleftrightarrow \forall x\geq x_0,\,\,f(x)\geq
g_k(x).\] 
Easy majorizations and minorations prove that a sufficient condition is to have, with $x_0\geq5$,
\[M(x_0):=\frac{1-2/x_0}{2}-\frac{1}{3}\Big(1+\frac{3}{2\ln(x_0/3)}\Big)\frac{1}{1-\frac{\ln3}{\ln x_0}}\geq k.\]
\vskip 1mm
\noindent (d) So the goal is to find an integer $x_0$ such that $\ds M(x_0)>\frac{1}{\phi(u_0)}$ for $u_0=\ln x_0$. We choose $x_0=2500$ and calculate the values
$M(2500)\approx0.0253\dots$ and
$\ds \frac{1}{\phi(\ln 2500)}\approx0.00241\dots$. So we can take $k=0.025$, and have to verify the sign of $g_k(x_0)$: $g_k(2500)=h_k(\ln 2500)\approx 2.7242>0$.

\vskip 1mm
\noindent (e) \textsl{In fine}, for $x\geq2500$ we have $f(x)>0$.
\vskip 1mm
\noindent (f) Now, using a computer, we see that $f(q)>0$ for every integer $q\in[781,\,2500]$.
\vskip 2mm
So our goal is to examine the integers in $[5, 780]$

First, remark the obvious inclusion:
\begin{equation}
\textrm {if}\,\, x\leq r\,\,\textrm {and}\,\,I_q:=]q/3,q/2[, \,\,\,\,]r/3,x/2[\,\,\subset\bigcap_{x\leq q\leq r}I_q.
\end{equation}
 We use this inclusion starting at $r=780$:
$780/3=260$; the smallest prime number majorizing 260 is $p=263$, so we take $x/2=264$, and so $x=528$. From (34), we have 
$p\in]780/3,528/2[\subset]q/3,q/2[$ for every $q\in[528,780]$, and $p$ does not divide such a $q$, which is so a minimal period. Now, we use the same method
starting from 527:
$527/3\approx175.66É$, and the smallest prime number majorizing this number is $p=179$, so we take $x/2=180,\,\,x=360$, and we can conclude that for all the
integers $q\in[360,527]$, the numbers $p/q$ are in $]1/3,1/2[$ and irreducible (because $p/q=p/pq'=1/q'\in]1/3,1/2[$ is not possible). So any integer
in [360, 527] is a minimal period. 

We continue this procedure, and finish with the fact that all the integers $q\geq24$ are minimal periods. The integers between 5 and 23 are examined one by
one, and only for 6 and 10 there is no irreducible fraction $p/6$ nor $p/10$ in $]1/3,1/2[$. Of course $3/7\in]1/3,1/2[$, but it is not {\sl a priori} in the set $\ds\bigcup_{d>0}\textrm {Im}(\theta_d)$. 
 
 \vskip 1mm
\noindent\textbf {Step (2) of the proof.}
We prove that 7 is a minimal period. We have to use the following result, which was previously already
used.
\vskip 1mm
\begin{lem}\label{FormCubic}
For $K>K_m$, the places and the shapes of the different branches of the cubic ${\cal C}_K$ are the same 
as in Figure 1.
\end{lem}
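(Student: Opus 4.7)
The strategy is to reduce the lemma to the positional information already contained in Lemma \ref{ToutSurCubic}(4), together with a shape analysis of each individual arc. Since $\overline{{\cal C}_K}$ is non-singular for $K>K_m$ by Lemma \ref{ToutSurCubic}(1), it is a compact smooth one-dimensional real submanifold of $\mathbb P^2(\mathbb R)$, hence a disjoint union of topological circles. The projective transformation $\phi$ of Section \ref{sec:3} identifies $\overline{{\cal C}_K}$ with the Weierstrass cubic $\Gamma_K$, which has three distinct real $X$-axis intersections $e_3<e_2<e_1$ (Lemma \ref{Formulaei}) and hence two real projective components. Pulling back gives two real components for $\overline{{\cal C}_K}$: the oval ${\cal C}_K^+$ (Proposition \ref{PermanentStable}) and a second unbounded component $\mathcal U$ which meets all three asymptotes at the infinite points $H$, $V$, $D$ and meets the axes at $A=(-d,0)$, $B=(0,-d)$.

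Because $\mathcal U$ is a topological circle, removing these five distinguished points leaves exactly five affine arcs, and Lemma \ref{ToutSurCubic}(4) already forces one arc to lie in each of the five triangular domains listed there. This accounts for the positional content of Figure 1; only the precise shape inside each region remains. For that I would view the defining equation as a quadratic in $y$,
\[
x\,y^2 + (x^2 - Kx + 1)\,y + (x + d) = 0,
\]
so that the two sheets over $x\neq 0$ read
\[
y = \frac{Kx - x^2 - 1 \pm \sqrt{\Delta(x)}}{2x}, \qquad \Delta(x) = (x^2 - Kx + 1)^2 - 4x(x+d).
\]

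In each triangular region I would use the sign of $\Delta$ to identify the $x$-interval supporting a real branch, the choice of $\pm$ to select the sheet with the correct sign of $y$, and implicit differentiation of $f(x,y) = xy(x+y) + (x+y) + d - Kxy$ to control monotonicity and concavity: the slope $dy/dx = -\partial_x f/\partial_y f$ changes sign only at zeros of $\partial_y f = x^2 + 2xy - Kx + 1$, which pins down the unique vertical tangent within each arc (when one exists). The reflection symmetry $S:(x,y)\mapsto(y,x)$ halves the work by exchanging the two asymptotes $x=0$ and $y=0$. The main obstacle is not conceptual but combinatorial: in each of the five regions one must consistently correlate the sign of $\Delta$, the sign of $y$, the sign of $dy/dx$ and the asymptotic regime near the relevant asymptote (or near $A$, $B$) to match the curve drawn in Figure 1. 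Once this bookkeeping is carried out case by case, the lemma follows.
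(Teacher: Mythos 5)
There is a genuine gap. The lemma's real content --- the part that is actually used later in Steps (3) and (4) of Theorem \ref{LesPeriodes} --- is not the location of the branches (that is already Lemma \ref{ToutSurCubic}(4), which you correctly reuse) but their \emph{shapes}: the strict convexity of the arcs $\Arc{E_1BJV}$, $\Arc{E_1AIH}$, ${\cal S}^+$ and ${\cal S}^-$, the existence and position of exactly two finite inflection points $I,J$, the cyclic order of the marked points $H,A,E_1,B,J,V,D$ along the unbounded component, and the fact that ${\cal S}^\pm$ lie above their asymptote $x+y=K$. Your plan reduces all of this to ``implicit differentiation to control monotonicity and concavity'' followed by ``bookkeeping,'' but no mechanism is given for the concavity. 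Determining where $d^2y/dx^2$ changes sign on the branches $y=\bigl(Kx-x^2-1\pm\sqrt{\Delta(x)}\bigr)/(2x)$ is exactly the problem of locating the real inflection points of the cubic, and that is the hard step: the paper solves it only after passing to the Weierstrass form, where the inflection condition reduces to the sign of the quartic $2PP''-P'^2$ on $[e_3,e_1]$ (Lemma \ref{Inflections}), and then orders the marked points on $\Gamma_K^*$ using the relation $B^*=V^**V^*$ and the explicit coordinates (41) before transporting back. Nothing in your proposal replaces this; ``once this bookkeeping is carried out'' is precisely where the proof is.

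A secondary inaccuracy: you assert that $dy/dx=-\partial_xf/\partial_yf$ ``changes sign only at zeros of $\partial_yf$.'' Zeros of $\partial_yf$ give vertical tangents (poles of the slope); sign changes of the slope occur at zeros of $\partial_xf$ (horizontal tangents) as well, and in any case first-order information controls monotonicity only, not convexity. The topological reduction at the start (smooth curve, two real components, five arcs from five marked points, one per triangular region) is sound and is a reasonable alternative framing of the positional part; but as it stands the proposal proves only what Lemma \ref{ToutSurCubic}(4) already gives and defers the genuinely new content of the lemma to an unexecuted computation.
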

\vskip 1mm 
This Lemma will be proved after the end of the proof of Theorem \ref{LesPeriodes}.

We must also use the following result.
\vskip 1mm
\begin{lem}\label{relation-nH}
For every integer $n\geq 0$ we have the following formula for the group law on the curve $\overline{{\cal C}_K}$
\begin{equation}
-nH=S[(n+1)H],
\end{equation}
where $S$ is the symmetry with respect to the diagonal.
\end{lem}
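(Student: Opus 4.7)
The plan is to derive (35) from two facts already at our disposal: the conjugation relation $F^{-1} = S \circ F \circ S$ of formula (4), and the identification $F^k(M) = M \build{+}_{V}^{} kH$ valid for every $k \in \mathbb{Z}$ (from Proposition \ref{FasAdd}, extended to negative iterates). Evaluating the latter at the zero element $V$ of the group law gives $F^k(V) = kH$, and in particular $F^{-n}(V) = -nH$.

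I would begin by recording two elementary properties of the diagonal symmetry $S$, both read off the defining formulas. First, the projective equation $xy(x+y) + (x+y)t^2 + dt^3 - Kxyt = 0$ is invariant under the swap of $x$ and $y$, so $S$ maps $\overline{\mathcal{C}_K}$ onto itself. Second, the projective coordinates $V = (0:1:0)$ and $H = (1:0:0)$ show at once that $S(V) = H$ and $S(H) = V$.

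An easy induction on $n$ starting from (4) and using $S \circ S = \mathrm{Id}$ then upgrades the conjugation to $F^{-n} = S \circ F^n \circ S$ for every integer $n \geq 0$. Applying this identity to the point $V$ and combining it with the two properties of $S$ and with Proposition \ref{FasAdd}, one obtains
\[ -nH \;=\; F^{-n}(V) \;=\; S\bigl(F^n(S(V))\bigr) \;=\; S\bigl(F^n(H)\bigr) \;=\; S\bigl(H \build{+}_{V}^{} nH\bigr) \;=\; S\bigl((n+1)H\bigr), \]
which is precisely (35). The argument presents no genuine obstacle; the only care required is to ensure that the conjugation (4) is applied iteratively as a self-map of $\overline{\mathcal{C}_K}$, which is exactly the content of the first preparatory remark on $S$.
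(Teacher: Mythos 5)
Your proof is correct and rests on exactly the same two ingredients as the paper's: the conjugation $F^{-1}=S\circ F\circ S$ of formula (4) and the identification of $\overline{F}$ with translation by $H$ from Proposition \ref{FasAdd}. The paper packages this as an induction on the relation $-nH=S[(n+1)H]$ itself (with base case $V=S(H)$), whereas you first iterate (4) to $F^{-n}=S\circ F^{n}\circ S$ and then evaluate at $V$; this is only a cosmetic repackaging of the same argument.
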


\begin{proof}
We denote ($R_n$) this relation. For $n=0$, ($R_0$) is $V=S(H)$, which is true. Suppose that ($R_{n-1}$) is true: $-(n-1)H=S(nH)$. 
Since $F(M)=M\build{+}_{V}^{ }H$ and $F^{-1}(M)=M\build{-}_{V}^{ }H$, we have, using ($R_{n-1}$) and formula (4),
\[-nH=F^{-1}[-(n-1)H]=F^{-1}[S(nH)]=(S\circ F)(nH)=S[(n+1)H].\]
This recursion proves the lemma.  \hskip 4mm\qed
\end{proof}
\vskip 2mm
Now the condition for having 7 as a period is $7H=V$ or $-3H=4H$. But by Lemma \ref{relation-nH}, $-3H=S(4H)$, and so the condition is exactly $4H=S(4H)$, that is 
$4H=F_1$ or $4H=D$ (it is easy to see that each $nH\in\overline{{\cal C}_K}\setminus{\cal C}_K^+$). 
Let be $N_0$ the point of ${\cal S}^+$ with horizontal tangent (where $y$ is minimum on ${\cal S}^+$), which exists because ${\cal S}^+$ is a convex curve,
 by Lemma \ref{FormCubic} and Figure 1. If $2H\not=N_0$, one see easily that $4H=(2H*2H)*V$ is finite and on the branches ${\cal S}^+$ or ${\cal S}^-$, and so cannot be
 equal to $F_1$. So the unique possibility is $2H=N_0$, and then $4H=D$. Conversely, if $4H=D$, then $2H=N_0$, that is $N_0$ is in the vertical line through point $A$.

But it is easy to calculate the second coordinate of $2H$:
\begin{equation}
y_{_{2H}}=\frac{d^2+Kd+1}{ d}.
\end{equation}
So the condition for having 7 as a period is $f'_x(-d,y_{_{2H}})=0$, that is 
\begin{equation}
Kd(1-d^2)=d^4-d^2-1.
\end{equation}
So 7 is a period if and only if (37) is true for some $d>0$ and $K>K_m$. So we must have
\begin{equation} 
\ds1<d<\sqrt{\frac{1+\sqrt{5}}{2}}.
\end{equation}
And now we have $\ds K=\frac{d^4-d^2-1}{d(1-d^2)}$. So 7 is a period iff
\[\frac{d^4-d^2-1}{d(1-d^2)}>K_m=3\ell+\frac{1}{\ell}.\]
Since the map $d\mapsto\ell$ is bijective increasing from 1 to $+\infty$ when $d$ increases from 0 to $+\infty$, this condition can be translated as
\[\frac{(\ell^3-\ell)^4-(\ell^3-\ell)^2-1}{(\ell^3-\ell)[1-(\ell^3-\ell)^2]}>3\ell+\frac{1}{\ell}.\]
We put $\ell^2:=x$ and obtain the condition $u(x):=x^3-x^2-2x+1<0$. It is easy to see that the roots of $u$ satisfy the inequalities $x_2<0<x_1<1<x_m$. So the condition $u(x)<0$, for $x>1$, is $x<x_m=1.80193377É$ and then $\ell<\ell_m=1.34236É$, and so
 \[1<d<d_m=1,07649\cdots<\sqrt{\frac{1+\sqrt{5}}{2}}=1.2220\cdots\]
Then it is exactly for $\ds 1<d<d_m$ that it exists a (unique) $\ds K(d):=\frac{d^4-d^2-1}{d(1-d^2)}>K_m$ such that 7 is a 
period for the solutions of (6) which are on the curve ${\cal C}_{K(d)}$.
\vskip 1mm
\begin{nota}\label{dm=d0}
With some calculation it is possible to see that one has
\begin{equation}
d_m=d_0\hskip 4mm\textrm {and}\hskip 4mm \ell_m=\ell_0.
\end{equation}
\end{nota}

\vskip 2mm
In fact, look at the equation $w^7+1=0$, with $w\not=-1$. By setting $\ds X:=w+\frac{1}{ w}$, this equation is equivalent to the new
equation $X^3-X^2-2X+1=0$, whose roots are
\[2\cos\frac{5\pi}{7}=-2\sin\frac{3\pi}{14}<0<2\cos\frac{3\pi}{7}=2\sin\frac{\pi}{14}<1<2\cos\frac{\pi}{7}=2\sin\frac{5\pi}{14}.\]
So $\ds\ell_m^2=2\sin\frac{5\pi}{14}$, and we have easily the relation $\ds2\sin\frac{5\pi}{14}=\frac{1}{1-2\sin\frac{\pi}{14}}=\ell_0^2$ (see Proposition \ref{limenKm}), because
$\ds2\sin\frac{5\pi}{14}-2\sin\frac{3\pi}{14}+2\sin\frac{\pi}{14}=1$, sum of the roots of the equation $X^3-X^2-2X+1=0$.

\vskip 2mm
\begin{nota}\label{Thetad0inf3/7}
There is also a direct simple analytic proof that if $d$ is near $d_0$ the function $K\mapsto\theta_d(K)$ attains 
the value 3/7, by using the method of the proof of  Corollary \ref{notonetoone} (with the intermediate value theorem). The comparizon of this method with this previous geometric one and Remark \ref{dm=d0} gives the 
following qualitative result : 
\begin{equation}
\forall\, K>K_m,\,\,\,\theta_{d_0}(K)\leq3/7.
\end{equation}
\end{nota}

\vskip 2mm
\noindent\textbf {Step (3) of the proof.}
We solve the case $q=6$ by geometric method. First, we remark that $2n$ is a period of solutions on $\overline{{\cal C}_K}$ iff  $2nH=V$, or $nH=-nH$ in the group law, that is
$nH=(nH)*B$ (see formula (14)), and this is equivalent to the fact that $B$ belongs to the tangent to $\overline{{\cal C}_K}$ at the point $nH$. So, in the case of
$q=6=2\times3$, we will see that $B$ does not belong to the tangent at the point $3H$. 

First we localize $3H$. Since $H*H=A=(-d,0)$, we have $2H=A*V\in{\cal S}^+$ (see the proof of Lemma \ref{PasPeriodes234}). So $(2H)*H\in{\cal S}^+$, and
$3H=(2H)+H=[(2H)*H]*V$ is the vertical projection of $(2H)*H$ on ${\cal S}$, and so $3H$ is on the arc $\Arc{HAB}$ of ${\cal S}$. Now we use
the following lemma.

\begin{lem}\label{Inflections} 
There is exactly (for $K>K_m$) two real and finite inflection points $I,\,J$ on ${\cal C}_K$, symmetric with respect to the diagonal,
situated on ${\cal S}$, in the second and the fourth quadrant, $I$ on the arc $\Arc{HA}$ and $J$ on the arc $\Arc{BV}$. The
arc $\Arc{IABJ}$ of ${\cal S}$ is strictly convex, the arcs $\Arc{HI}$ and $\Arc{JV}$ also, and it is also the
case for ${\cal S}^+$  and ${\cal S}^-$.
\end{lem}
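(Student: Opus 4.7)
My plan is to proceed in four steps.

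\emph{Step 1 (flex count and pairing).} Classical theory of smooth real projective cubics gives that ${\cal C}_K$ (nonsingular by Lemma \ref{ToutSurCubic}, (1)) has exactly three real inflection points, all on its non-compact component and mutually collinear; this can be seen from the fact that since $\overline{{\cal C}_K}$ has two real components one has $E(\mathbb R) \cong S^1 \times \mathbb Z/2$, whose $3$-torsion is $\mathbb Z/3$, entirely in the identity component. The infinite point $D$ is one flex (Lemma \ref{ToutSurCubic}, (2)); denote the other two real flexes by $P$ and $Q$. The involution $S$ preserves $\overline{{\cal C}_K}$ and its set of flexes, and fixes $D$ projectively. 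If $P$ and $Q$ were both fixed by $S$ they would both lie on the diagonal, so the line through them would be the diagonal itself, which does not contain $D$ (whose direction $(1,-1)$ is not the diagonal's direction $(1,1)$), contradicting collinearity of the three real flexes. Hence $Q = S(P)$; put $I := P$ and $J := S(I)$.

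\emph{Step 2 (localization on ${\cal S}$).} With $g(x,y) = x^2y + xy^2 + x + y + d - Kxy$ and
\[
N(x,y) := g_{xx}\,g_y^2 - 2\,g_{xy}\,g_x\,g_y + g_{yy}\,g_x^2,
\]
any smooth local graph $y = y(x)$ on $\{g=0\}$ satisfies $y''(x) = -N(x,y)/g_y(x,y)^3$, so the affine flexes are the common zeros of $g$ and $N$. To rule out flexes on ${\cal S}^+$ (and by $S$-symmetry on ${\cal S}^-$), I would parametrize ${\cal S}^+$ as $y=y(x)$, $x \in (-\infty,0)$, and show $N$ has constant sign on it. The key simplification is to reduce $N$ modulo $g$, using the cubic equation to lower its total degree, and then to exploit the sign constraints $x<0$, $y>0$, $x+y>K$ together with the bound $K > K_m > 4$ from Lemma \ref{ptfixcrit}. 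This places both $I,J$ on ${\cal S}$, with $I$ in the second and $J$ in the fourth quadrant by $S$-pairing.

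\emph{Step 3 (precise arcs and convexity).} It remains to exclude the middle arc $\Arc{AF_1B}$ of ${\cal S}$, i.e., its third-quadrant part; by $S$-symmetry it suffices to do so in $\{-d<x<0,\ -d<y<0\}\cap\{g=0\}$. This is again a sign check of $N$ (after reduction modulo $g$) on this region. Once done, the two flexes can only sit on $\Arc{HA}$ and $\Arc{BV}$, and by $S$-symmetry $I\in\Arc{HA}$ and $J\in\Arc{BV}$. On each of the remaining arcs $\Arc{HI}$, $\Arc{IABJ}$, $\Arc{JV}$, ${\cal S}^+$, ${\cal S}^-$, the polynomial $N$ is nowhere zero, hence $y''(x)$ (or $x''(y)$ after a choice of chart adapted to the arc's vertical/horizontal tangents at $H,V$) has constant sign on that arc, which is exactly strict convexity.

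\emph{Main obstacle.} The substantial part is the sign analysis of $N$ on each target region: even after reducing modulo $g$, one must control a polynomial in two variables subject to curve and quadrant constraints. I expect the reduction to produce expressions whose sign is clear from $K>K_m$ and from the signs of $x, y, x+y$, but the bookkeeping will be delicate and is the principal technical content of the proof.
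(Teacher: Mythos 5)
Your Step 1 is correct and is a genuinely different route from the paper's: the paper counts the finite real flexes by passing to the Weierstrass form $Y^2=4X^3-g_2X-g_3$ and showing that the quartic $h:=2PP''-P'^2$ (whose roots are the flex abscissas) satisfies $h'=48P$ and $h(e_i)=-P'(e_i)^2<0$, hence has exactly two real roots, only the larger of which lies where $P>0$; this yields two finite real flexes on the unbounded branch, symmetric about the $X$-axis. Your argument via $E(\mathbb R)\cong S^1\times\mathbb Z/2$, its $3$-torsion, and the $S$-equivariance and collinearity of the real flex triple reaches exactly the same count (two finite real flexes, on the non-compact component, swapped by $S$) more conceptually, and is a legitimate substitute for that computation.

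The gap is in Steps 2--3. The localization of $I$ and $J$ on $\Arc{HA}$ and $\Arc{BV}$, and hence every convexity assertion of the lemma, rests on the claim that $N$ has constant sign on ${\cal S}^+$, ${\cal S}^-$ and the third-quadrant arc of ${\cal S}$ --- and this is only announced, not performed; you yourself flag it as ``the principal technical content.'' Nothing in the proposal guarantees that reducing the degree-$5$ polynomial $N$ modulo $g$ produces an expression of evident sign on those regions under the constraints $K>K_m>4$, so as written the lemma is not proved. The paper avoids this computation entirely by a soft argument: once the count ``exactly two finite real flexes, symmetric under $S$'' is known, the shape of the curve (Lemma \ref{FormCubic}) forces at least one flex on $\Arc{HA}$ --- the tangent at $H$ is the asymptote $y=0$, which that arc approaches from above at infinity and then re-crosses at $A=H*H$, so $y''$ must change sign --- and symmetrically on $\Arc{BV}$; this exhausts the count, so every other arc is flex-free and therefore strictly convex. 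Your Step 1 already supplies the count, so you could close the gap by replacing Steps 2--3 with this asymptote-crossing argument (noting also, as the paper needs later in Step (4a) of Theorem \ref{LesPeriodes}, that $A$ and $B$ themselves are not flexes).
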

\vskip 1mm
With this lemma, we see that if $3H\in\Arc{IB}$, since $3H\not=B$, the tangent at $3H$ does not cut, by convexity, the arc $\Arc{IB}$, 
and so $B$ does not belong to this tangent. If now $3H\in\Arc{HI}$, by convexity the tangent at $3H$ has positive slope, and
cannot pass to $B$. So in each case we conclude that
$3H\not=-3H$, and so 6 is not a minimal period. 
\vskip 2mm
\noindent\textit{Proof of Lemma} \ref{Inflections}.   
The transformation $\phi$ of $\overline{{\cal C}_K}$ onto $\Gamma_K$ is real and projective, and so preserves the real inflection
points. So we search the inflection points of $\Gamma_K$. 

The positive part ($y\geq 0$) of this curve is defined by $y=\sqrt{P(x)}$, where $P(x)=4x^3-g_2x-g_3=4(x-e_1)(x-e_2)(x-e_3)$. One shows easily that the roots of
$(\sqrt{P})''$ are those of $2PP''-P'^2$. Let $h$ be this function. It is a degree 4 polynomial, and $h(e_i)=-P'^2(e_i)<0$. But $h\,'=2PP'''=48P$, which is
zero at the $e_i$'s. So the function $h$ is negative on $[e_3,e_1]$ and has only two real roots. The smallest gives no real inflection point, but the
greatest gives two inflection points on the unbounded branch of $\Gamma_K$, symmetric with respect to the $x$-axis. When we transform these two points by
$\phi^{-1}$, we obtain exactly two real finite inflection points $I$ and $J$, symmetric with respect to the diagonal. But, with Lemma \ref{FormCubic}, there are necessary
at least one inflection point on the arc
$\Arc{HA}$ and on the arc $\Arc{BV}$. So these points are exactly $I$ and $J$. The unicity of this inflection points and Lemma \ref{FormCubic} give the results on concavity and convexity asserted in Lemma \ref{Inflections}. \hskip 4mm\qed
\vskip 2mm
\noindent\textbf {Step (4) of the proof.}
Now we study if 10 is a minimal period (it is a period because 5 is period). So we suppose that $5H\not=V$ and $10H=V$, that is 
$5H=-5H=(5H)*B$, or : $B$ belongs to the tangent to $\overline{{\cal C}_K}$ at $5H$. We examine two cases.
\vskip 2mm
\noindent (4a) $3H=A$, that is $(2H)*H$ is on the vertical line through $A$, or $2H=N_0$ (see step (2) of the proof).
In this case we have $5H=\big((3H)*(2H)\big)*V=V*V=B$ and then $B*B=B*5H=5H=B$. Hence $B$ is an inflection point, and this is impossible by Lemmas \ref{FormCubic} and \ref{Inflections}.

\vskip 2mm
\noindent(4b) $3H\not=A$, that is $2H\not=N_0$. We look at two subcases.

(i) The point $2H$ is at the left side of $N_0$. In this case $(3H)*(2H)\in\Arc{BV}$, and then $5H\in\Arc{BV}$. But by hypothesis $5H\not=V$ and $5H$ cannot 
be equal to $B$ by (4a). So $5H$ belongs to the \textit {open} arc $\Arc{BJV}$. If $5H\in\Arc{JV}$, the tangent at $5H$ has a positive slope and cannot pass 
through $B$, and if $5H\in\Arc{BJ}$, the strict convexity of this arc proves that the tangent at $5H$ cannot pass through $B$.

(ii) The point $2H$ is at the right side of $N_0$. In this case $5H\in{\cal S}^+$, and no tangent at this branch passes through $B$, except if $5H=V$, 
which is excluded by hypothesis.

\vskip 1mm
\noindent So in every case of (4b) the tangent at $5H$ cannot pass through $B$, and then 10 is not a minimal period. 

This achieves the proof of Theorem \ref{LesPeriodes}. \hskip 4mm \qed
\end{proof}

\vskip 2mm
\noindent\textit{\textbf{Proof of Lemma}} \ref{FormCubic}.   
We will use the form of the curve $\Gamma_K$, or more exactly of the curve $\Gamma_K^*$ whose horizontal translated
 (by $\ds\frac{A}{12}$, see formulas (14), (16), (19)) is $\Gamma_K$, that is $\Gamma_K^*={\cal T}_2\circ{\cal T}_1(\overline{{\cal C}_K})$. We denote 
$V^*={\cal T}_2\circ{\cal T}_1(V)$, and with the same notation the image in $\Gamma_K^*$ of other remarkable points of $\overline{{\cal C}_K}$. With previous 
(see formula (22)) and new easy calculations, we find :
\begin{align}
\left\{
\begin{aligned}
A^*&=(d,-dK^{3/2},1),\\ B^*&=(d,dK^{3/2},1),\\ E_1^*&=(f_1^*,0,1),\\ H^*&=(K+d,-(K+d)K^{3/2},1),\\ V^*&=
(K+d,(K+d)K^{3/2},1),\\ D^*&=(0,1,0),
\end{aligned}
\right.
\end{align} 
with $0<f_1^*<d$ (use the inequalities of Lemma \ref{Devfi} and formula (26)). So these points are easy to place on the unbounded branch of the curve $\Gamma_K^*$, because $K>0$:
 see Figure 3.
\vskip 1mm

\begin{figure}
\includegraphics[scale=.65]{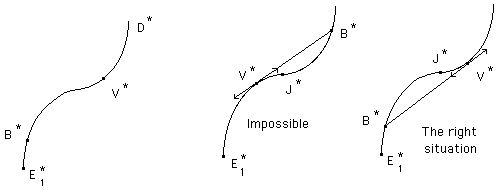}
\label{Figure 3}
\caption{\hfil \hskip 26mm \textbf{Fig. 3bis}}
\end{figure}
Now we will locate the two inflection points $J^*$ (on the positive part of the branch) and $I^*$ (on the negative part), which have the same abscissa and 
opposite second coordinates (see the proof of lemma 9). We know that the three arcs $\Arc{D^*I^*}$, $\Arc{I^*J^*}$ and $\Arc{J^*D^*}$ are strictly convex, 
and that we have the relation $B^*=V^**V^*$ which means that the tangent to the curve at $V^*$ cuts again the curve at the point $B^*$. If we suppose that $J^*$ was on the arc
$\Arc{V^*D^*}$, by the convexity of the arc $\Arc{E_1^*J^*}$, the tangent at $V^*$ would cut the curve at the point $B^*\in\Arc{J^*D^*}\subset\Arc{V^*J^*D^*}$, which 
is impossible, because we have $x_{B^*}<x_{V^*}$ by (41) (see Figure 3bis).

So the order of the points on the arc $\Arc{E_1^*D^*}$ is strict and is $E_1^*<B^*<J^*< V^*<D^*$, where $``P<Q"$ means that $P$ is at left of $Q$ and $P\not=Q$.
 For transporting the different connected arcs into the initial curve $\overline{{\cal C}_K}$, we use an easy fact : 
\vskip 1mm
\noindent\textit {Fact.}  The slope $p$ of the tangent to $\overline{{\cal C}_K}$ at the point $B=(0,-d,1)$ is, for $K>0$,
\begin{equation}
p=-(d^2+Kd+1)<-1.
\end{equation}
\vskip 1mm
Now, with this tangent, it is clear that the arc $\Arc{E_1BJV}$ is as in Figure 1, and also the arc $\Arc{E_1AIH}$ by symmetry.

For the two symmetric and convex arcs $\Arc{VD}$ and $\Arc{DH}$, we make a little calculation: we put, in the equation of ${\cal C}_K$, $x+y-K=t$; 
we obtain $ty^2+t(t-K)y-(t+K+d)=0$. So, if $t\to 0$, we have
\[y=±\frac{\sqrt{K+d}}{\sqrt {t}}(1+o(1)),\]
which is defined only for $t>0$, 
with $y\to±\infty$ when $t\to0_+$. So the two connected convex arcs with asymptotes $x+y-K=0$, $x=0$ or $y=0$ are above 
the line $x+y-K=0$: it is the case at $\pm\infty$, and this line does not cut the curve. 
So we have the form of Figure 1. \hskip 4mm \qed
\vskip 2mm
\begin{nota}\label{CourbeBizarre}
The necessity of Lemma \ref{FormCubic} is justified by the other forms of the curve in some cases (in fact when $K\leq-d$), see for example 
the following Figure 4.
\end{nota}
\begin{figure}
\includegraphics[scale=.65]{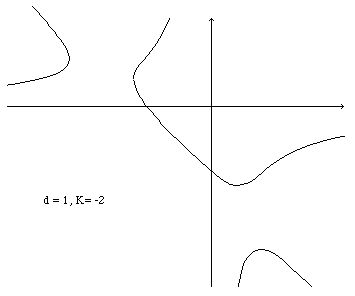}
\label{Figure 4}
\caption{}
\end{figure}

\begin{nota}\label{CoordHdoubletilda}
It results of the relation $\widetilde{B}=\widetilde{V}*\widetilde{V}$ (which comes by translation from the formula 
$B^*=V^**V^*$) that we have 
\[\ds\widetilde{\widetilde{H}}=\psi(\widetilde{H})=\widetilde{H}\build{+}_{\widetilde{V}}^{ }\omega=(\widetilde{H}*\omega)*\widetilde{V}=
\widetilde{V}*\widetilde{V}=\widetilde{B}=(\frac{K^3}{12}-\frac{2K}{3},dK^{3/2},1)\]
(see (41) and the proof of Proposition \ref{limitealinfini}).
\end{nota}

\vskip 2mm
\section{Chaotic behaviour of the dynamical system (2)}

In this part we will see that the dynamical system (2) in $\mathbb R_*^{+^2}$ associated to the homographic system of difference 
equations (6) has uniform sensitiveness 
to initial conditions on every compact set not containing the equilibrium $L$.
\vskip 2mm
\begin{teo}\label{Chaos}
For every compact set ${\cal K}\subset\mathbb R_*^{+^2}$ not containing the equilibrium $L$, it exists a number $\delta({\cal K})>0$ such that 
for every point $M\in{\cal K}$ and every neighborhood $W$ of $M$ it exists $M'\in W$ such that $ {dist}(F^n(M),F^n(M')\geq\delta({\cal K})$ 
for infinitely many integers $n$.
\end{teo}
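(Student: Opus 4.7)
The approach exploits Theorem \ref{ConjRot}: on each positive invariant cubic ${\cal C}_K^+$ the map $F$ is conjugated to a rotation of the circle $\mathbb T$ by angle $2\pi\theta_d(K)$, with $\theta_d$ real-analytic on $]K_m,+\infty[$ and non-constant for every $d>0$ (if $d\neq d_0$ the limits at the endpoints differ by Propositions \ref{limenKm} and \ref{limitealinfini}; if $d=d_0$, apply Corollary \ref{notonetoone}). Combining the three changes of variables of Section \ref{sec:3} with the Weierstrass parametrization yields a real-analytic diffeomorphism $\Phi:(K,\varphi)\mapsto M\in{\cal C}_K^+$ defined on an open neighborhood of the compact annulus $A_{\cal K}:=\{M\in\R_*^{+^2}:G(M)\in[K_1,K_2]\}$, where $[K_1,K_2]:=G({\cal K})$ is a compact subinterval of $]K_m,+\infty[$ (by compactness of ${\cal K}$ and the fact that $L$ is the unique point where $G=K_m$). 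In these coordinates, $F$ becomes $(K,\varphi)\mapsto (K,\varphi+\theta_d(K))$.

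I would then extract a uniform ``antipodal separation''. Setting $M^\dagger:=\Phi(K,\varphi+\tfrac12)$ when $M=\Phi(K,\varphi)$, the continuous function $M\mapsto\mathrm{dist}(M,M^\dagger)$ is strictly positive on ${\cal K}$ (each ${\cal C}_K^+$ is a Jordan curve and $M\neq M^\dagger$), hence admits a positive minimum $4\delta>0$; I will prove the statement with $\delta({\cal K}):=\delta$. By uniform continuity of $\Phi$ on a compact neighborhood of $A_{\cal K}$, there exists $\eta_0>0$ such that whenever $P=\Phi(K,\varphi)$ and $P'=\Phi(K',\varphi')$ lie in $A_{\cal K}$ with $|K-K'|<\eta_0$ and $\|\varphi-\varphi'\|_{\mathbb T}<\eta_0$, one has $\mathrm{dist}(P,P')<\delta$.

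Given $M\in{\cal K}$ and a neighborhood $W$ of $M$, write $(K,\varphi):=\Phi^{-1}(M)$ and pick $\eta\in(0,\eta_0]$ small enough that the image under $\Phi$ of the $\eta$-ball of $(K,\varphi)$ is contained in $W$. By analyticity and non-constancy of $\theta_d$, the image $\theta_d(]K-\eta,K+\eta[)$ contains a non-degenerate interval, in which irrationals are dense; hence I can choose $M':=\Phi(K',\varphi')\in W$ with $|K-K'|<\eta$, $|\varphi-\varphi'|<\eta$ and $\alpha:=\theta_d(K')-\theta_d(K)$ irrational. In action--angle coordinates the angular difference between $F^n(M)=\Phi(K,\varphi+n\theta_d(K))$ and $F^n(M')=\Phi(K',\varphi'+n\theta_d(K'))$ is $(\varphi'-\varphi)+n\alpha\pmod 1$; by Weyl's equidistribution theorem it lies in $]\tfrac12-\eta_0,\tfrac12+\eta_0[$ for infinitely many $n$. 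For each such $n$, let $M_n^\dagger$ denote the antipode on ${\cal C}_K^+$ of $M_n:=F^n(M)$; the action--angle parameters of $F^n(M')$ and of $M_n^\dagger$ then differ by less than $\eta_0$ in each component, so $\mathrm{dist}(F^n(M'),M_n^\dagger)<\delta$, and the triangle inequality gives
$$\mathrm{dist}(F^n(M),F^n(M'))\geq \mathrm{dist}(M_n,M_n^\dagger)-\delta\geq 4\delta-\delta=3\delta\geq\delta.$$

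The main obstacle is securing this uniform control of $\Phi$ and $\Phi^{-1}$: I need ``angular closeness'' to translate into Euclidean closeness uniformly on the compact annulus $A_{\cal K}$, and I need the antipodal separation on one cubic to remain large on every sufficiently close cubic. This follows from the real-analytic dependence on $K$ of all the ingredients of Section \ref{sec:3} (the projective maps ${\cal T}_i$, the roots $e_i$, the Weierstrass parametrization, and the group isomorphism $\psi$), combined with the compactness of $G({\cal K})\subset\,]K_m,+\infty[$. Once this uniformity is established, the equidistribution argument above closes the proof.
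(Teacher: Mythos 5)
Your proposal is correct and follows essentially the same route as the paper's: conjugate $F$ on the compact annulus $\{K_1\le G\le K_2\}$ to the fibered rotation $(K,\varphi)\mapsto(K,\varphi+\theta_d(K))$, use analyticity and non-constancy of $\theta_d$ to find, inside any neighborhood, a nearby leaf whose rotation number differs irrationally, and transfer the resulting angular separation back through the conjugation, whose uniform continuity in $(K,\varphi)$ is exactly the point the paper secures via Lemma \ref{ContWeierstrass} before invoking its Proposition \ref{FibreCercles} and the conjugation ``Fact''. One small repair: your antipodal-separation constant $4\delta$ must be the minimum of $\mathrm{dist}(M,M^\dagger)$ over the whole compact annulus $A_{\cal K}$ rather than over ${\cal K}$, since the iterates $M_n=F^n(M)$ leave ${\cal K}$ while staying on ${\cal C}_{G(M)}^+\subset A_{\cal K}$; the function is still continuous and positive on $A_{\cal K}$, so nothing else changes.
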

\vskip 1mm
Of course, in this assertion, ``\textit {dist}" denote the euclidean distance in $\mathbb R_*^{+^2}$.
\vskip 2mm
The proof will use a general topological result (probably well known) on ``dynamical systems fibring in rotations on $\mathbb T$", the critical point for using 
this result
being the proof of some uniformity in the inverse conjugation of previous Section \ref{sec:3}.

\begin{propo}\label{FibreCercles}
Let $X$ be a metric space. Let be also 
$\theta:X\to\mathbb T=\mathbb R/\mathbb Z$ a continuous map such that for every non-empty open set $U$, the set $\theta(U)$ contains a non-empty open set. Define the map 
\[g:X\times\mathbb T\to X\times\mathbb T:(x,\a)\to(x,\a+\theta(x)).\]
Then the dynamical system $(X\times\mathbb T,g)$ has $\delta$-sensitiveness to initial conditions for every $\delta\in]0,1/2[$.
\end{propo}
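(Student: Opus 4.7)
My plan is to exploit the explicit iteration
\[
g^n(x,\alpha)=(x,\,\alpha+n\theta(x)),
\]
which shows that two orbits starting at $M=(x,\alpha)$ and $M'=(x',\alpha')$ differ in the $\mathbb T$-coordinate by the linear drift $(\alpha-\alpha')+n(\theta(x)-\theta(x'))$. Under any reasonable product metric on $X\times\mathbb T$, the distance between $g^n(M)$ and $g^n(M')$ dominates $d_{\mathbb T}\bigl((\alpha-\alpha')+n(\theta(x)-\theta(x')),\,0\bigr)$, so it will suffice to produce $M'$ arbitrarily close to $M$ for which this $\mathbb T$-distance exceeds $\delta$ for infinitely many $n$.

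Given a neighborhood $W$ of $M$, I pick a product $U\times I\subset W$ with $U$ an open neighborhood of $x$ in $X$ and $I$ an open arc in $\mathbb T$ containing $\alpha$. By the hypothesis on $\theta$, the image $\theta(U)$ contains a non-empty open set $V\subset\mathbb T$. Since the countable set $\theta(x)+\mathbb Q$ cannot exhaust the uncountable $V$, I pick $\beta\in V\setminus(\theta(x)+\mathbb Q)$, choose $x'\in U$ with $\theta(x')=\beta$, and take $\alpha'=\alpha$. Then $M':=(x',\alpha)\in W$ and $\rho:=\theta(x')-\theta(x)$ is irrational modulo $1$ by construction.

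Because $\rho$ is irrational, the orbit $\{n\rho\bmod 1:n\geq 1\}$ is dense in $\mathbb T$, and in fact every non-empty open arc is visited infinitely often (otherwise a sufficiently late tail of the orbit would miss a non-empty translate of the arc, contradicting the density of the translated orbit). Applied to the arc $(\delta,1-\delta)\subset\mathbb T$, which is non-empty precisely because $\delta<1/2$, this yields infinitely many $n$ with $n\rho\bmod 1\in(\delta,1-\delta)$; for each such $n$, $d_{\mathbb T}(n\rho,0)\geq\delta$, and therefore $d_{X\times\mathbb T}(g^n(M),g^n(M'))\geq\delta$, which establishes $\delta$-sensitiveness.

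I do not expect any real obstacle in this argument: the only substantive use of the hypothesis on $\theta$ is to ensure that $\theta(U)$ has non-empty interior, which is exactly what lets me select a perturbation $x'$ with irrational drift $\rho$; if $\theta$ were merely continuous this could fail (for instance, $\theta$ locally constant near $x$ would force $\rho=0$ and no separation at all), so this topological input is the one indispensable ingredient. Density of the irrationals inside the open set $V\subset\mathbb T$ and the standard minimality of irrational rotations dispose of the remaining steps.
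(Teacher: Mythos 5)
Your argument is correct and is essentially the paper's own proof: both perturb only the $X$-coordinate inside $U\times I\subset W$, use the hypothesis that $\theta(U)$ has non-empty interior to select $x'$ with $\theta(x')-\theta(x)$ irrational, and then invoke the density of the irrational rotation orbit to get angular separation at least $\delta$ for infinitely many $n$. No meaningful difference in approach.
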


\begin{figure}
\includegraphics[scale=.65]{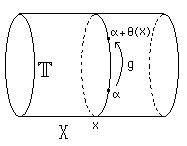}
\label{Figure 5}
\caption{}
\end{figure}

\begin{proof}
Although this is probably a classical result, we give a short proof for the completeness. Let be $M:=(x_0,\alpha_0)\in X$, and let be 
$W:=U\times I$ an open neighborhood of $(x_0,\alpha_0)$. Let be $J$ a non-empty open interval contained in $\theta(U)$. 
It is possible 
to find in $J$ a point $\theta_1$ such that in $\mathbb T=\mathbb R/\mathbb Z$ the number $\lvert\theta_1-\theta(x_0)\rvert$ is irrational and in ]0,1[. 
It exists $x'\in U$ such that
$\theta(x')=\theta_1$. We put $M'=(x',\a_0)\in W$, and calculate the distance between the points $g^n(M)$ and $g^n(M')$: 
$\widetilde{d}\big(g^n(M),g^n(M')\big)=d_X(x,x')+||n(\theta_1-\theta(x_0))||$ ($\widetilde{d}$ denote the $\ell^1$-distance in the product $X\times\mathbb T$, 
and $||.||$ the distance to the point 0 in $\mathbb R/\mathbb Z$). Let be $\delta\in]0,1/2[$. Since $\theta_1-\theta(x_0)$ is irrational, for infinitely many values of $n$ we 
have $||n(\theta_1-\theta(x_0))||>\delta$, 
and so $\widetilde{d}\big(g^n(M),g^n(M')\big)\geq\delta$ for infinitely many $n$: this is the $\delta$-sensitiveness to initial conditions of 
the dynamical system $(X\times\mathbb T,g)$. \hskip 4mm\qed
\end{proof}
\vskip 2mm
Now it is easy to prove the following fact:
\vskip 2mm
\noindent\textit {\textbf{Fact.}} Let be $(X,g)$ a dynamical system which has $\delta$-sensitiveness to initial conditions, where $X$ is a \textit {compact} metric space. Let be $Y$
another compact metric space, and $h:X\to Y$ a homeomorphism. Then the conjugated dynamical system $(Y,\widetilde{g})$ by $h$ 
($\widetilde{g}=h\circ g\circ h^{-1}$) has $\eta$-sensitiveness to initial conditions for some $\eta>0$.

\vskip 2mm
The last argument for the proof of theorem 4 will be the following result.
\vskip 2mm
\begin{lem}\label{ContWeierstrass}
Let be $\wp_{_K}$ the Weierstrass' function built with the numbers $g_2(K)$ and $g_3(K)$ of formulas (20) and (21), and defined by the formula
\begin{equation}
\wp_{_K}(z)=\frac{1}{ z^2}+\sum_{\l\in\Lambda_K,\,\l\not=(0,0)}\Big(\frac{1}{(z-\l)^2}-\frac{1}{\l^2}\Big),
\end{equation}
where $\Lambda_K$ is the lattice of the points of $\mathbb C$ defined by
\[\Lambda_K:=\{2p\omega_1(K)+2iq\omega_2(K)\,|\,(p,q)\in\mathbb Z^2\},\]
with
\begin{equation}
\omega_1(K)=\frac{1}{\sqrt{e_1-e_3}}\int_0^{+\infty}\frac{\textrm {d}u}{\sqrt{(1+u^2)(1+\varepsilon u^2)}}
\end{equation}
and
\begin{equation}
\omega_2(K)=\frac{1}{\sqrt{e_1-e_3}}\int_0^{\pi/2}\frac{\textrm {d}u}{\sqrt{1-\varepsilon\sin^2u}},
\end{equation}
where $e_1,\,e_2,\,e_3,\,\varepsilon$ are as in formula (24).
Then we have the property:  
\[ \wp_{_K}(2x\omega_1(K)+i\omega_2(K))\to\wp_{_{K_0}}(2x\omega_1(K_0)+i\omega_2(K_0))\,\,\,\textit {uniformly\,\, for\,\,}x\in[0,1]\]
when $K\to K_0$, and the same property for the derivative $\wp'_{_K}$.
\end{lem}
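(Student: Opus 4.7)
The plan is to reduce the statement to joint continuity of $\wp_K$ in $(K,z)$ on a suitable compact region, and to combine that with the continuity of the parametrization $x \mapsto 2x\omega_1(K) + i\omega_2(K)$. I would proceed in three steps.

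First, I would establish continuous dependence of all the auxiliary data on $K$ near $K_0$. By formula (20) and its analogue, the invariants $g_2(K), g_3(K)$ are polynomial in $K$; Lemma~\ref{ToutSurCubic}(1) gives a non-vanishing discriminant on $]K_m, +\infty[$, so the roots $e_1(K) > e_2(K) > e_3(K)$, the modular parameter $\varepsilon(K) \in (0,1)$, and, by dominated convergence applied to the integrals (44) and (45), the half-periods $\omega_1(K)$ and $\omega_2(K)$, all depend continuously (in fact real-analytically) on $K$. I would fix a compact neighborhood $J \ni K_0$ in $]K_m, +\infty[$ on which $0 < c \leq \omega_j(K) \leq C$ uniformly.

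Second, I would observe the key geometric fact that the moving path avoids the moving lattice with room to spare, uniformly on $J$. The lattice $\Lambda_K = 2\omega_1(K)\mathbb{Z} + 2i\omega_2(K)\mathbb{Z}$ has points whose imaginary parts lie in $2\omega_2(K)\mathbb{Z}$, whereas the path has imaginary part exactly the half-period $\omega_2(K)$. Hence $\mathrm{dist}(2x\omega_1(K) + i\omega_2(K),\, \Lambda_K) \geq \omega_2(K) \geq c > 0$ for all $x \in [0,1]$ and $K \in J$, and simultaneously the path stays in a fixed bounded region of $\mathbb{C}$.

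Third, I would pass to the limit in the Weierstrass series (43). Each summand $(z-\lambda)^{-2} - \lambda^{-2}$ is of order $|z|/|\lambda|^3$ for $|\lambda|$ large, and since the lattice mesh of $\Lambda_K$ is bounded above and below on $J$, the dominating Eisenstein-type series $\sum_{\lambda \neq 0} |\lambda|^{-3}$ is uniformly convergent and uniformly bounded for $K \in J$. The Weierstrass $M$-test then makes the tail $\sum_{|\lambda| > R}$ arbitrarily small, uniformly in $(x, K) \in [0,1] \times J$; the finite head $\sum_{|\lambda| \leq R}$ is a finite sum of functions jointly continuous in $(K, z)$, thanks to Step~2 which keeps the denominators bounded away from $0$. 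Combining head and tail yields the claimed uniform convergence of $\wp_K(z(x,K))$ to $\wp_{K_0}(z(x,K_0))$ on $[0,1]$. An identical argument, this time directly dominated by the series $\sum |\lambda|^{-3}$, handles $\wp'_K(z) = -2\sum_\lambda (z-\lambda)^{-3}$.

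The main obstacle is precisely the uniformity in $K$: producing a positive lower bound for the distance between the moving path and the moving lattice, and uniform tail estimates for the series. Once Steps~1 and 2 secure those, the argument reduces to routine dominated convergence on the Weierstrass series and its term-by-term derivative.
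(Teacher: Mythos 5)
Your proposal is correct and follows essentially the same route as the paper's (brief) sketch: confine the moving segments $H_K$ to a fixed compact set disjoint from all the lattices $\Lambda_K$ for $K$ near $K_0$, bound the tail of the series (43) uniformly, and use continuity in $K$ of the $e_j$ and $\omega_i$ for the finite head. Your explicit observation that the path's imaginary part $\omega_2(K)$ sits exactly halfway between consecutive lattice levels $2q\omega_2(K)$, giving $\mathrm{dist}(z,\Lambda_K)\geq\omega_2(K)\geq c>0$, is a clean justification of a point the paper leaves implicit.
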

\vskip 2mm
\noindent\textit{\textbf{ Sketch of the proof of the lemma.}}  For formulas (44) and (45) we refer to [1] or [2]. The segments $H_K:=\{2x\omega_1(K)+i\omega_2(K)\,|\,x\in[0,1]\}$, when $K\to K_0$, 
are 
contained in a fixed compact set disjoint from $\Lambda_{K_0}$ and from the $\Lambda_K$ if $K$ is near to $K_0$. So, by some calculations, 
one can easily have uniformly for $x\in[0,1]$ an  upper bound 
for  the remainder of the series (43). For the finite part of the sum, we use the continuity in $K$, uniformly with respect to $x$, 
of each of its terms, by using 
the continuity of the functions $K\mapsto e_j(K)$, $K\mapsto\omega_i(K)$. \hskip 4mm \qed
\vskip 3mm
\noindent\textit{\textbf{ Sketch of the proof of the theorem.}} 
 It is well known (see [1]) that for $x$ varying in $[0,1]$ the formula
\[{\cal P}_K(x):=\big(\wp_{_K}(2x\omega_1(K)+i\omega_2(K),\wp'_{_K}(2x\omega_1(K)+i\omega_2(K)\big)\] 
gives a one-to-one parametrization of the bounded component $\Gamma_K^+$ of  $\Gamma_K$. From now on we denote $\phi_K$ and $\psi_K$ the maps 
defined by formulas (15), (16), (18) and (22), for marking their dependance on $K$. Now, the map $z\mapsto(\psi_K\circ \phi_K)^{-1}(z)=
(\psi_K\circ{\cal T}_3\circ{\cal T}_2\circ{\cal T}_1)^{-1}(z)$ depends continuously on $K$, uniformly for $z$ in 
a given compact in $\mathbb R_*^{+^2}$ (where the $\Gamma_K^+$ are situated for $K$ near $K_0$). 

So, if we choose $K_1,\,K_2$ such that $K_m<K_1<K_2$, the map $h$ defined on $[K_1,K_2]\times\mathbb T$ by
\[h(K,x)=(\psi_K\circ \phi_K)^{-1}\circ{\cal P}_K(x)\]
is a homeomorphism of $[K_1,K_2]\times\mathbb T$ onto the set 
\[\Delta(K_1,K_2):=\{K_1\leq G\leq K_2\}=\bigcup_{K_1\leq K\leq K_2}{\cal C}_K^+.\]

\begin{figure}
\includegraphics[scale=.65]{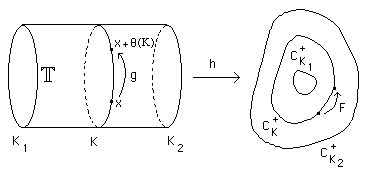}
\label{Figure 6}
\caption{}
\end{figure}

But the map $F:\Delta(K_1,K_2)\to\Delta(K_1,K_2)$ is conjugated by $h$ to the map $g:[K_1,K_2]\times\mathbb T\to[K_1,K_2]\times\mathbb T:(K,x)\mapsto(K,x+\theta_d(K))$ 
(see Theorem \ref{ConjRot} and figure 6). So by Lemma \ref{ContWeierstrass}, 
the fact and Proposition \ref{FibreCercles}, the dynamical system $\big(\Delta(K_1,K_2),F\big)$ has a $\delta$-sensitiveness to initial conditions for some $\delta>0$.
Now, if a compact set ${\cal K}\subset\mathbb R_*^{+^2}$ does not contain the equilibrium $L$, put $\ds K_1=\min_{\cal K}G$ and $\ds K_2=\max_{\cal K}G$. Then
${\cal K}\subset\Delta(K_1,K_2)$, and $F_{\,| _{\cal K}}$ has a $\delta$-sensitiveness to initial conditions. \hskip 4mm\qed
\vskip 1mm
\begin{nota}\label{Pointwise}
The proof of theorem 4 gives in fact an improvement to the assertion on ``pointwise" chaotic behavior of the dynamical systems 
studied in [2], [3], [4].
\end{nota}

\vskip 6mm
\noindent\textbf{Appendix.\hskip 4mm An assisted computer proof that the map $\psi$ defined by formula (22) is an isomorphism between the two chord-tangent group laws }
\vskip 2mm
We start with a standard regular cubic ${\cal C}$ in Weierstrass' form: $y^2=4x^3+ax+b$, with a given point $Z$ on it, we denote $\omega$ the infinite point of ${\cal C}$ in
vertical direction, and suppose $Z\not=\omega$. We denote $\build{+}_{Z}^{ }$ the addition for the chord-tangent law on ${\cal C}$ with zero element $Z$. So we have the following result.

\begin{propo}\label{Chord-tangent}
The map $\psi:{\cal C}\to{\cal C}:M\mapsto M\build{+}_{Z}^{ }\omega$ is an isomorphism of the chord-tangent law on ${\cal C}$ with 
unit element $Z$ on the standard 
chord-tangent law on ${\cal C}$ with unit element $\omega$.
\end{propo}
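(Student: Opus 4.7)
The plan is to observe that $\psi(M) = M \build{+}_{Z}^{ } \omega$ is simply translation by $\omega$ inside the group $({\cal C}, \build{+}_{Z}^{ })$, hence automatically a bijection; moreover $\psi(Z) = Z \build{+}_{Z}^{ } \omega = \omega$, so $\psi$ at least maps the identity of the source group to the identity of the target group. It remains to show that $\psi$ is a homomorphism, and for this I would transport the $\build{+}_{Z}^{ }$-law through $\psi$ and check that the transported operation coincides with the chord-tangent law with identity $\omega$.

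Concretely, I would define the transported operation by
\[
P \star Q \;:=\; \psi\bigl(\psi^{-1}(P) \build{+}_{Z}^{ } \psi^{-1}(Q)\bigr),
\]
so that $\psi$ is tautologically an isomorphism from $({\cal C}, \build{+}_{Z}^{ })$ onto $({\cal C}, \star)$. A one-line expansion, writing $\build{-}_{Z}^{ }$ for subtraction in $({\cal C}, \build{+}_{Z}^{ })$, yields $P \star Q = (P \build{+}_{Z}^{ } Q) \build{-}_{Z}^{ } \omega$. Setting $R := P * Q$ and using the definition $P \build{+}_{Z}^{ } Q = (P*Q)*Z = R*Z$ and $P \build{+}_{\omega}^{ } Q = R*\omega$, the identity $\star = \build{+}_{\omega}^{ }$ reduces to the single geometric equation
\[
(R*Z) \build{-}_{Z}^{ } \omega \;=\; R*\omega,
\qquad\text{equivalently}\qquad
(R*\omega) \build{+}_{Z}^{ } \omega \;=\; R*Z.
\]

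The key step is then the elementary chord-tangent lemma that $(R * a) * a = R$ for every $a, R \in {\cal C}$ with $R \neq a$: indeed, the points $R$, $a$, $R * a$ are collinear by definition of $R*a$, so the line through $a$ and $R*a$ is this same line, whose third intersection (other than $a$ and $R*a$) is $R$. Applying this with $a = \omega$ and unfolding the definition of $\build{+}_{Z}^{ }$ gives
\[
(R*\omega) \build{+}_{Z}^{ } \omega \;=\; \bigl((R*\omega)*\omega\bigr)*Z \;=\; R*Z,
\]
which is exactly what was needed. This turns the proposition into a two-line geometric verification, with no coordinate bash.

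The main obstacle I expect is not algebraic but bookkeeping: one must handle the degenerate configurations in which two of the points $P, Q, Z, \omega$ coincide, or in which a secant degenerates to a tangent (so that $P*P$ must be interpreted as the third intersection of the tangent line at $P$, counted with multiplicity). These are finitely many special cases and all reduce to the same chord-tangent identity by the standard tangent convention, so they do not require new ideas, only a routine check. This geometric route is precisely what the authors' appendix avoids by brute symbolic computation, and it has the advantage of not depending on the Weierstrass normal form at all.
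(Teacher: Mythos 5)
Your proof is correct, and it takes a genuinely different route from the paper's. The authors reduce the statement to the concrete incidence identity $R*\omega=[(R*Z)*W]*Z$ with $W=(Z*Z)*\omega=\build{-}_{Z}^{ }\omega$, and then verify that identity by a symbolic Maple computation in Weierstrass coordinates (two runs, one for each sign of $y$). You reach the same reduction --- your transported law $P\star Q=(P\build{+}_{Z}^{ }Q)\build{-}_{Z}^{ }\omega$ is exactly their law $\times$ --- but you then dispose of the identity synthetically: from the involution $(R*a)*a=R$ you get $(R*\omega)\build{+}_{Z}^{ }\omega=\bigl((R*\omega)*\omega\bigr)*Z=R*Z$, and cancelling $\omega$ in the group $({\cal C},\build{+}_{Z}^{ })$ gives $(R*Z)\build{-}_{Z}^{ }\omega=R*\omega$, hence $\star=\build{+}_{\omega}^{ }$. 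Both arguments presuppose the same classical input, namely that $P\build{+}_{Z}^{ }Q=(P*Q)*Z$ is an abelian group law for an arbitrary (not necessarily flex) base point $Z$ --- the paper already uses this throughout for the law $\build{+}_{V}^{ }$ on $\overline{{\cal C}_K}$, so you are not assuming anything extra. What your version buys: it never uses the Weierstrass normal form, it needs no explicit formula for $\build{-}_{Z}^{ }\omega$, it handles the degenerate configurations uniformly via the tangent convention, and since nothing in the argument uses that $\omega$ is a flex it proves Corollary~\ref{corolChord-tangent} (the statement from [10]) in the same breath. What the paper's version buys is only that the crucial identity is checked by an explicit computation rather than by appeal to associativity of the chord-tangent law; since that associativity is invoked elsewhere in both proofs anyway, your argument is a strict simplification.
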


\vskip 1mm
The following result is asserted without proof in [10] page 21.
\vskip 1mm

\begin{corol}\label{corolChord-tangent}
Let be $A$ and $Z$ two different points on the regular cubic $y^2=4x^3+ax+b$. Then the two chord-tangent group laws with unit elements
 $Z$ and $A$ are isomorphic.
\end{corol}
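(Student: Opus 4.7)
\textbf{Proof proposal for Corollary \ref{corolChord-tangent}.}
The plan is to reduce everything to the standard chord-tangent law with unit $\omega$, the vertical point at infinity, and use Proposition \ref{Chord-tangent} as a black box. Write $+_Z$, $+_A$ and $+_\omega$ for the three chord-tangent group laws on $\mathcal{C}$, and recall that by Proposition \ref{Chord-tangent} the map $\psi_P : \mathcal{C}\to\mathcal{C}$, $M\mapsto M *_{\!*}\,(P * \omega)$, more precisely
\[\psi_P(M) = M +_P \omega = (M * \omega) * P,\]
is an isomorphism from $(\mathcal{C},+_P)$ onto $(\mathcal{C},+_\omega)$, provided $P\neq\omega$.

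First I would treat the generic case $A\neq\omega$ and $Z\neq\omega$. Applying Proposition \ref{Chord-tangent} to $P=Z$ and to $P=A$ gives two isomorphisms
\[\psi_Z : (\mathcal{C},+_Z)\;\longrightarrow\;(\mathcal{C},+_\omega),\qquad \psi_A : (\mathcal{C},+_A)\;\longrightarrow\;(\mathcal{C},+_\omega).\]
Then the composite $\psi_A^{-1}\circ\psi_Z : (\mathcal{C},+_Z)\to(\mathcal{C},+_A)$ is an isomorphism of groups, which is the desired conclusion. The residual cases $A=\omega$ or $Z=\omega$ are even simpler: if, say, $Z=\omega$, then $(\mathcal{C},+_Z)=(\mathcal{C},+_\omega)$ by definition, and $\psi_A$ alone already realizes the isomorphism $(\mathcal{C},+_A)\to(\mathcal{C},+_\omega)=(\mathcal{C},+_Z)$; the case $A=\omega$ is symmetric.

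There is no real obstacle here, since all the analytic content has already been pushed into Proposition \ref{Chord-tangent}; the only mild subtlety is the case distinction according to whether $A$ or $Z$ coincides with the vertical point at infinity $\omega$. Once that is handled, the corollary is just the abstract fact that two objects both isomorphic to a common third object are isomorphic to each other, realized concretely by the birational map $\psi_A^{-1}\circ\psi_Z$.
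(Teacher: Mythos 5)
Your proof is correct and is exactly the argument the paper intends: the corollary is stated as an immediate consequence of Proposition \ref{Chord-tangent}, obtained by composing $\psi_A^{-1}\circ\psi_Z$ so that both laws are identified with the standard law based at $\omega$. Your extra care with the degenerate cases $A=\omega$ or $Z=\omega$ is a harmless (and slightly more thorough) addition to what the paper leaves implicit.
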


\vskip 1mm
\noindent\textit{\textbf{A computer assisted proof of  Proposition \ref{Chord-tangent}}}
First, we remark that the map $\psi$ is obviously an isomorphism of the group $({\cal C},\build{+}_{Z}^{ },Z)$ onto
 the group
$({\cal C},\times,\omega)$, where 
\begin{equation}
P \times Q:= P\build{+}_{Z}^{ }Q\build{+}_{Z}^{ }\big(\build{-}_{Z}^{ }\omega\big).
\end{equation}
 We remark also that we have
 \begin{equation} 
\big(\build{-}_{Z}^{ }\omega\big)=(Z*Z)*\omega:=W.
 \end{equation}
Now we have to show that the law $\times$ coincides with the law $\build{+}_{\omega}^{ }$, that is 
\[(P*Q)*\omega=\big\{[(P*Q)*Z]*W\big\}*Z\]
for every $P,\,Q$. This is equivalent to the relation
 \begin{equation}
\forall R\in{\cal C},\,\,\,R*\omega=[(R*Z)*W]*Z.
 \end{equation}
 For proving relation (48) we use Maple, and present a sequence of calculation instructions which gives the result, with comments. 
\vskip 1mm
We put $Z=(u,v)$, with $v\geq 0$ (this is possible by the symmetry of the curve). We put $W=(U,V)$, and denote $R=(x,y)$. We have $v=\sqrt{4u^3+au+b}$, and
 two possibilities
$y=\sqrt{4x^3+ax+b}$ or $y=-\sqrt{4x^3+ax+b}$, so we write two lists of instructions, depending on the sign of $y$. We denote 
\[R*Z:=R_1=(x_1,y_1),\,\,\,R_1*W:=R_2=(x_2,y_2),\,\,\,R_2*Z:=R_3=(X,Y).\]
Our goal is to prove that $R_3=R*\omega$, that is $X=x$ and $Y=-y$.
We remark that if a line $y=px+q$ cuts the curve at three points, their abscisses satisfy the relation $x_1+x_2+x_3=p^2/4$ (see the proof of Proposition \ref{limitealinfini}).

\vskip 1mm
The following figure shows the geometric construction.

\begin{figure}
\includegraphics[scale=.65]{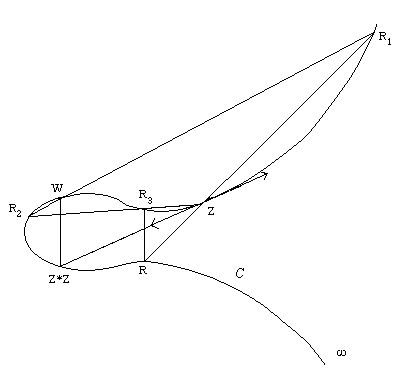}
\label{Figure 7}
\caption{}
\end{figure}

\noindent\textit {First list of Maple instructions} 

\vskip 3mm
\begin{align*}
\noindent v:&=\textrm {sqrt}(4*u^3+a*u+b);
\hskip 26mm[Z=(u,v)\in{\cal C}]\hskip 26mm
\\
\noindent y:&=\textrm {sqrt}(4*x^3+a*x+b);
\hskip 26mm[R=(x,y)\in{\cal C}]\hskip 26mm
\end{align*}

\vskip -5mm
\begin{align*}
\left.
\begin{aligned}
p:&=\textrm {simplify}((12*u^2+a)/(2v));
 \\
q:&=\textrm {simplify}(v-p*u);
\end{aligned}
\right\}
\hskip 14mm[y=px+q\hbox{ is the tangent to ${\cal C}$ at }Z]
\end{align*}

\vskip -5mm
\begin{align*}
\left.
\begin{aligned}
U:&=\textrm {simplify}(p^2/4-2*u);
\\
V:&=\textrm {simplify}(-(p*U+q)); 
\end{aligned}
\right\}
\hskip 21mm[\hbox{coordinates of } W]\hskip 20mm
\end{align*}

\vskip -5mm
\begin{align*}
\left.
\begin{aligned}
p[1]:&=\textrm {simplify}((y-v)/(x-u));
\\
q[1]:&=\textrm {simplify}(v-p[1]*u);
\end{aligned}
\right\}
\hskip 15mm[y=p_1x+q_1\hbox{ is the line }(RZ)]
\end{align*}
\vskip -5mm
\begin{align*}
\left.
\begin{aligned}
x[1]:&=\textrm {simplify}(p[1]^2/4-x-u);
\\
y[1]:&=\textrm {simplify}(p[1]*x[1]+q[1]);
\end{aligned}
\right\}
\hskip 15mm[\hbox{coordinates of }R_1]\hskip 16mm
\end{align*}

\vskip -5mm

\begin{align*}
\left.
\begin{aligned}
p[2]:&=\textrm {simplify}((y[1]-V)/(x[1]-U));
\\
q[2]:&=\textrm {simplify}(V-p[2]*U);
\end{aligned}
\right\}
\hskip 8mm[y=p_2x+q_2\hbox{ is the line }(R_1W)]
\end{align*}

\vskip -5mm
\begin{align*}
\left.
\begin{aligned}
x[2]:&=\textrm {simplify}(p[2]^2/4-x[1]-U);
\\
y[2]:&=\textrm {simplify}(p[2]*x[2]+q[2]);
\end{aligned}
\right\}
\hskip 17mm[\hbox{coordinates of }R_2]\hskip 28mm
\end{align*}

\vskip -5mm
\begin{align*}
\left.
\begin{aligned}
p[3]:&=\textrm {simplify}((y[2]-v)/(x[2]-u));
\\
q[3]:&=\textrm {simplify}(v-p[3]*u);
\end{aligned}
\right\}
\hskip 10mm[y=p_3x+q_3\hbox{ is the line }(R_2Z)]
\end{align*}

\vskip -5mm
\begin{align*}
\left.
\begin{aligned}
X:&=\textrm {simplify}(p[3]^2/4-u-x[2]);
\\
Y:&=\textrm {simplify}(p[3]*X+q[3]);
\end{aligned}
\right\}
\hskip 20mm[\hbox{coordinates of }R_3]\hskip 14mm
\end{align*}

\vskip 2mm
At this point, we see that $X$ has the form $\ds X=\frac{N.x}{ D^2}$. So we continue with

\vskip 1mm
H:= simplify($D^2$);

A:= simplify(X$*$H-H$*$x);

\vskip 1mm
and obtain $A=0$, that is $X=x$. Now we substitute $x$ to $X$ in the formula giving $Y$, obtain $Y'$, and then make
\vskip 2mm
B:= simplify($Y'$$*$sqrt(4$*$$x^3$+a$*$x+b)+4$*$$x^3$+a$*$x+b);
\vskip 2mm
We obtain $B=0$, that is $Y=Y'=-\sqrt{4x^3+ax+b}=-y$, and so $X=x$ and $Y=-y$, that is the result.
\vskip 2mm
Now we write the same list of instructions, but with $y=-\sqrt{4x^3+ax+b}$ and an obvious modification in $B$, and obtain also the same result.  \hskip 4mm \qed

\vskip 6mm
\noindent{\bf References}

\vskip 2mm
\noindent [1] P. Appel et E. Lacour, \textit{ Principes de la th\'eorie des fonctions elliptiques}, 1897, Paris, Gauthier-Villars.
\vskip 1mm
\noindent [2] G. Bastien and M. Rogalski, \textit{Global Behaviour of the Solutions  of Lyness' Difference Equations}, 
J. of Difference Equations and Appl., 2004, vol. 10, p. 977-1003. 
\vskip 1mm
\noindent[3] G. Bastien and M. Rogalski, \textit{ On some algebraic difference equations $u_{n+2}u_n=\psi(u_{n+1})$ in $\mathbb R^+_*$,
 related to families of conics or cubics : generalization of the Lyness' sequences}, Journal of Mathematical Analysis and Applications 300 (2004), 303-333.
\vskip 1mm
\noindent[4] G. Bastien and M. Rogalski, \textit {On the algebraic difference equations $u_{n+2}+u_n=\psi(u_{n+1})$ in $\mathbb R$, related to a family of elliptic 
quartics in the plane}, J. Math.Anal. Appl. 326 (2007) 822-844.
\vskip 1mm
\noindent[5] G. Bastien and M. Rogalski,  \textit{ Level sets lemmas and unicity of critical point of invariants, tools for local stability and topological properties of dynamical systems}, Sarajevo Journal of Math., vol. 8 (21) (2012), 273-282.
\vskip 1mm
\noindent[6] J. Duistermaat,  \textit{Discrete Integrable Systems. QRT Maps and Elliptic Surfaces}, Springer (2010).
\vskip 1mm
\noindent[7] D. Husemoller, \textit{Elliptic Curves}, 1987, Springer-Verlag, USA New-York.
\vskip 1mm
\noindent[8] G.R.W. Quispel, J. A. G. Roberts  and C. J. Thompson, \textit{ Integrable mappings and soliton equations}, Physics Letters A:
126 (1988), 419-421. 
\vskip 1mm
\noindent[9] G. Robin,  \textit{Estimation de la fonction de Tchebychef $\Theta$ sur le k-i\`{e}me nombre premier et
grandes valeurs de la fonction $\omega(n)$ nombre de diviseurs premiers de $n$},  Acta Arithmetica XLII (1983), p. 367-389.
\vskip 1mm
\noindent[10] J. H. Silverman and J. Tate, \textit{Rational points on elliptic curves}, Springer.
\vskip 1mm
\noindent[11] G. Tenenbaum, \textit{Introduction \`a la th\'eorie analytique et probabiliste des
nombres}, Cours sp\'ecialis\'e n$^{\rm o}$1, Collection Soci\'et\'e Math\'ematique de France, 1995, Paris. 
\vskip 1mm
\noindent[12] E. C. Zeeman,  \textit{ Geometric unfolding of a difference equation}, unpublished paper (1996).

\end{document}